\documentclass[11pt,a4paper,reqno]{amsart}

\pdfoutput=1

\usepackage[utf8]{inputenc}
\usepackage[british]{babel}
\usepackage{amsmath,amssymb,amsthm}
\usepackage{thmtools,thm-restate}
\usepackage{hyperref,float,caption}
\hypersetup{
	colorlinks,
	linkcolor={red!60!black},
	citecolor={green!60!black},
	urlcolor={blue!60!black},
}
\usepackage[nameinlink]{cleveref}
\usepackage{geometry}
\usepackage{enumitem}
\usepackage[presets={vec-cev}]{letterswitharrows}
\usepackage[abbrev, msc-links, nobysame]{amsrefs}
\usepackage{tikz}
\usetikzlibrary{fadings}
\usetikzlibrary{arrows}
\usetikzlibrary {arrows.meta}

\usetikzlibrary{decorations.pathmorphing}

\tikzset{
	vertex/.style={circle, draw, minimum size=1.5em},
	edge/.style={->, > = latex'}
}

\usetikzlibrary{arrows}

\tikzset{
	dot/.style = {circle, draw = black, fill, minimum size=#1,
		inner sep=0pt, outer sep=0pt},
	dot/.default = 3.8pt
}
\usetikzlibrary{angles,quotes}

\usepackage{caption}
\usepackage{subcaption}

\usepackage{xcolor}
\definecolor{CornflowerBlue}{rgb}{0.39, 0.58, 0.93}
\definecolor{LavenderMagenta}{rgb}{0.93, 0.51, 0.93}
\definecolor{PastelOrange}{rgb}{1.0, 0.7, 0.28}

\newcommand{\x}[1]{\vecev{#1}}

\def\N{\mathbb N}

\newcommand{\Down}[1]{\lceil #1 \rceil}
\newcommand{\Up}[1]{\lfloor #1 \rfloor}

\newtheorem{theorem}{Theorem}[section]
\newtheorem{lemma}[theorem]{Lemma}

\newtheorem{proposition}[theorem]{Proposition}
\newtheorem{problem}[theorem]{Problem}

\newtheorem{claim}{Claim}[theorem]
\theoremstyle{definition}

\newtheorem*{theorem*}{Theorem}

\crefname{enumi}{}{}
\crefname{enumii}{}{}
\crefformat{enumi}{#2#1#3}
\crefformat{enumii}{#2#1#3}
\Crefformat{enumi}{#2#1#3}
\Crefformat{enumii}{#2#1#3}

\crefname{definition}{definition}{definitions}
\crefformat{definition}{#2Definition~#1#3}
\Crefformat{definition}{#2Definition~#1#3}

\crefname{section}{section}{sections}
\crefformat{section}{#2Section~#1#3}
\Crefformat{section}{#2Section~#1#3}

\crefname{subsection}{Subsection}{subsections}
\crefformat{subsection}{#2Subsection~#1#3}
\Crefformat{subsection}{#2Subsection~#1#3}

\crefname{lemma}{lemma}{lemmata}
\crefformat{lemma}{#2Lemma~#1#3}
\Crefformat{lemma}{#2Lemma~#1#3}

\crefname{remark}{remark}{remarks}
\crefformat{remark}{#2Remark~#1#3}
\Crefformat{remark}{#2Remark~#1#3}

\crefname{theorem}{theorem}{theorems}
\crefformat{theorem}{#2Theorem~#1#3}
\Crefformat{theorem}{#2Theorem~#1#3}

\crefname{corollary}{corollary}{corollaries}
\crefformat{corollary}{#2Corollary~#1#3}
\Crefformat{corollary}{#2Corollary~#1#3}

\crefname{figure}{figure}{figures}
\crefformat{figure}{#2Figure~#1#3}
\Crefformat{figure}{#2Figure~#1#3}

\crefname{proposition}{proposition}{propositions}
\crefformat{proposition}{#2Proposition~#1#3}
\Crefformat{proposition}{#2Proposition~#1#3}

\crefname{observation}{observation}{observations}
\crefformat{observation}{#2Observation~#1#3}
\Crefformat{observation}{#2Observation~#1#3}

\crefname{claim}{claim}{claims}
\crefformat{claim}{#2Claim~#1#3}
\Crefformat{claim}{#2Claim~#1#3}

\def\lqedsymbol{\ifmmode$\lrcorner$\else{\unskip\nobreak\hfil
		\penalty50\hskip1em\null\nobreak\hfil$\rule{1.2ex}{1.2ex}$
		\parfillskip=0pt\finalhyphendemerits=0\endgraf}\fi}

\newenvironment{claimproof}[1][\proofname]
{%
	\proof[#1]%
}
{%
	\endproof%
}

\title{Menger's theorem for ends of digraphs}
\keywords{connectivity, infinite digraph, end}
\subjclass[2020]{05C63, 05C20}

\author{Florian Reich}
\address{Universit{\"a}t Hamburg, Department of Mathematics, Bundesstra{\ss}e~55 (Geomatikum), 20146~Hamburg, Germany}
\email{florian.reich@uni-hamburg.de}

\begin{document}
	
	\begin{abstract}
		Polat generalised Menger's theorem -- the maximum number of vertex-disjoint paths between two sets $A$ and $B$ equals the minimum size of an $A$--$B$~separator -- to ends of undirected graphs.
		
		In this paper we extend Menger's theorem to ends of digraphs.
		As an application, we characterise the combined degree of ends of digraphs.
	\end{abstract}
	
	\maketitle
	
	\section{Introduction}
	Menger's theorem~\cite{menger1927allgemeinen} is a classical result in graph theory which states that the maximum number of vertex-disjoint paths between two sets of vertices $A$ and $B$ is equal to the minimum size of an $A$--$B$~separator in every finite or infinite graph.
	Over the past 50 years, significant progress has been made in extending Menger's theorem to ends of undirected graphs.
	\emph{Ends} are one of the most important concepts in infinite graph theory and can be seen as points at infinity towards which rays converge.
	More formally, ends are defined as equivalence classes of rays, where two rays $R_1$ and $R_2$ are equivalent if there exist infinitely many disjoint $R_1$--$R_2$~paths \cite{diestel2024graph}.
	
	Zelinka \cite{zelinka1970uneigentliche} and Halin~\cite{halin1974note} started investigating Menger's theorem for ends in the context of locally finite graphs.
	Polat~\cites{polat1979aspects,polat1991mengerian,polat1994minimax} extended their results to arbitrary undirected graphs and proved:
	\begin{theorem}[\cite{polat1991mengerian}*{Theorem 3.4}] \label{polat}
		Let $G$ be an undirected graph and let $A$ and $B$ be sets of vertices and ends of $G$ such that $(A,B)$ is dispersed.
		Then the maximum number of vertex-disjoint $\hat{A}$--$\hat{B}$~tracks in $G$ is equal to the minimum size of an $A$--$B$~separator in $G$.
	\end{theorem}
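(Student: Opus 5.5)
The plan is to reduce the statement to the classical vertex version of Menger's theorem together with an Erdős--Menger type compactness argument, treating ends via separations of $G$ by finite vertex sets. The inequality ``maximum $\le$ minimum'' is the easy half. Given an $A$--$B$~separator $X$, every $\hat A$--$\hat B$~track must meet $X$. A track starting at an end $\omega\in A$ has a tail in every component $C(S,\omega)$ with $S$ finite, and a separator by definition avoids such tails. So the pairwise disjoint tracks are injected into $X$, and the number of tracks is at most $|X|$. Dispersedness of $(A,B)$ ensures that the closures $\hat A,\hat B$ are separated from one another by some finite vertex set. This makes the notion of an $A$--$B$~separator non-vacuous.

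For the hard direction, first assume the minimum size $k$ of an $A$--$B$~separator is finite. Replace each end $\omega\in A$ by a ``virtual'' source: choose a finite $S_0$ witnessing dispersedness, and for each finite $S\supseteq S_0$ contract $C(S,\omega)$ for $\omega\in A$ (respectively $B$) to a single vertex $a_S$ (resp.\ $b_S$). Apply the vertex version of Menger's theorem in the resulting auxiliary graph $G_S$. Since any separator in $G_S$ of size $<k$ lifts to an $A$--$B$~separator of $G$, we obtain $k$ disjoint $A_S$--$B_S$ paths. Then lengthen $S$ along an increasing sequence (or a directed system, using Zorn's lemma / compactness when $G$ is uncountable). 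At each step we extend the paths into the contracted components by rays converging to the required ends. This uses the standard fact that an end lying in the closure of $A$ admits, inside each $C(S,\omega)$, disjoint rays to it from any finite set of entry vertices, provided the degree is large enough; this follows from Menger applied once more inside $C(S,\omega)$. The limit objects are $k$ disjoint $\hat A$--$\hat B$~tracks.

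For infinite $k$ the statement is cardinal-valued, and I would argue by transfinite recursion. Build a maximal family of disjoint tracks. If its size $\kappa$ were smaller than $k$, then deleting the union of the tracks' vertex sets leaves a graph still containing a track, because a set of size $<k$ does not separate. Iterating this gives at least $k$ tracks. A standard augmenting argument handles the case where $\kappa$ is infinite but $<k$, since the union of $\kappa$ tracks has size $\kappa\cdot\aleph_0=\kappa<k$.

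The main obstacle is the limit step in the finite case. The paths found for different $S$ need not be compatible, so naively taking limits can make rays collapse or converge to the wrong end. I would first try the Erdős--Menger alternating-path (Aharoni--Berger style) argument: it produces a single family of tracks together with a separator consisting of exactly one point on each track. Ends in $A\cup B$ may themselves appear as such separator points, which is exactly why separators are allowed to contain ends. Proving that this separator chosen on the tracks really separates $A$ from $B$ in the end-compactified sense is where the dispersedness hypothesis has to be used essentially.
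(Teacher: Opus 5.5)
The paper obtains this statement from Theorem~\ref{thm:main}, applied to $G$ viewed as a symmetric digraph (where $\Up{A}=\hat A$ and $\Down{B}=\hat B$). Measured against that proof, your hard direction has two genuine gaps.

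\textbf{The infinite case.} Your argument for infinite $k$ (and for the maximum being attained) fails at the step ``a set of size $<k$ does not separate, so some track avoids it''. An $A$--$B$~separator must also make $(A,X)$ and $(X,B)$ dispersed, and the vertex set $U$ of a maximal family $\mathcal F$ can violate exactly this. Suppose some $F\in\mathcal F$ is a ray of an end $a\in A$ that forms one side of a ladder whose other side avoids $U$. Then for every finite $S_a$, a late rung followed by a tail of the other side is an $a$--$U$~track in $G-S_a$. So maximality of $\mathcal F$ yields no contradiction. Moreover, if $\mathcal F$ is finite but contains rays, then $|U|=\aleph_0$ need not even be smaller than $k$.

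What you need is a way to turn a maximal family $\mathcal F$ into an honest separator that is finite if $\mathcal F$ is, and of size $|\mathcal F|$ otherwise; this is Theorem~\ref{thm:small_separator}. The paper first builds, via in- and out-envelopes, an $A$--$B$~separator $S$ containing a finite $a$--$S$~separator $S_a$ for every $a\in A$ and a finite $S$--$b$~separator $S_b$ for every $b\in B$ (Lemma~\ref{lem:existence_separator}). It then takes $\bigcup_{F\in\mathcal F}\bigl(S_{a(F)}\cup S_{b(F)}\cup V(P(F))\bigr)$, where $P(F)\subseteq F$ is a finite path containing $F\cap(S_{a(F)}\cup S_{b(F)})$. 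The finite sets $S_a,S_b$ are exactly what make this set dispersed. Your side claim that dispersedness of $(A,B)$ yields a finite set separating $\hat A$ from $\hat B$ is also false: two infinite vertex sets joined by a perfect matching form a dispersed pair.

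\textbf{The finite case.} This case is not carried out, and the contraction scheme does not work as stated. Contracting $C(S,\omega)$ to a single vertex $a_S$ destroys multiplicity. Take the grid $\N\times[k]$ with $k\geq 2$, let $A=\{\omega\}$ for its unique end $\omega$, and let $B$ be its first column. Then $\{a_S\}$ separates $A_S$ from $B_S$ in $G_S$, yet every $A$--$B$~separator of $G$ has at least $k$ vertices. So small separators of $G_S$ do not lift. A single finite $S_0$ also cannot in general witness dispersedness for infinitely many ends of $A\cup B$ at once. The limit step, which you flag yourself, is left open.

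The Aharoni--Berger-type fallback is not a proof either. An end version of that theorem is a much harder result (Bruhn--Diestel--Stein, in a differently set-up framework), and its directed analogue is left open in this paper. Your use of it also relies on separators containing ends, whereas here separators are sets of vertices.

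The missing idea is to split at a minimum separator. Once some finite separator exists (from the first step), take one, $X$, of minimum size. Every $\hat A$--$X$ and every $X$--$\hat B$~separator then has at least $|X|$ vertices. So it suffices to prove the one-sided statement in which one side is a finite set of vertices (Lemma~\ref{lem:general}), apply it on both sides, and concatenate the tracks at $X$. The paper proves that one-sided statement in three steps: pass to a countable subgraph; make it outer-locally finite by attaching rays at vertices of infinite out-degree and at endpoints of paths; then run Gallai's alternating-path argument with a K\"onig-type compactness step (Theorem~\ref{thm:alternating}). Finiteness of the source side is what makes that compactness work.
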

	\noindent
	An $A$--$B$~track $T$ is an $A$--$B$~path, a ray in an end in $B$ that starts in a vertex of $A$, a ray in an end in $A$ that starts in a vertex of $B$ or a double ray that splits into two rays, one in an end in $A$ and one in an end in $B$ such that no internal vertex of $T$ is in $A\cup B$.
	A tuple $(A,B)$ is \emph{dispersed} if for every end $a \in A$ there is a finite set $S_a \subseteq V(G)$ such that there is no $a$--$B$~track in $G - S_a$ and for every end $b \in B$ there is a finite set $S_b \subseteq V(G)$ such that there is no $A$--$b$~track in $G - S_b$.
	A set $S \subseteq V(G)$ is an $A$--$B$~separator if there is no $A$--$B$~track in $G - S$ and the tuples $(A,S)$ and $(S,B)$ are dispersed.
	Moreover, $\hat{X}$ is the union of $X$ and all vertices that dominate an end in $X$.
	
	The most recent progress in this field deals with the following strengthening of Menger's theorem for infinite graphs, which was conjectured by Erd\H{o}s~\cite{nash1967infinite} and proved by Aharoni and Berger \cite{aharoni2009menger}:
	There is a family $\mathcal{P}$ of disjoint $A$--$B$~paths and an $A$--$B$~separator that consists of precisely one vertex of each path in $\mathcal{P}$.
	Bruhn, Diestel and Stein~\cite{bruhn2005menger} extended this strengthening to ends of undirected graph in a setting which is inspired by the topological space~$|G|$ and differs slightly from that in~\cref{polat}.
	
	While previous research has focused on ends of undirected graphs, we take on the task of investigating Menger's theorem for ends of digraphs.
	We aim for a generalisation of \cref{polat} to Zuther's~\cite{zuther1998ends} ends of digraphs.
	Zuther's notion of ends of digraphs has recently received increasing attention~\cites{hamann2024end,hamann2024infinite,reich2024halin,hamann2024boundary,craik2016ends} and extends ends of undirected graphs to in-rays and out-rays:
	An \emph{end} of a digraph $D$ is an equivalence class of in- and out-rays, where two in-/out-rays $R_1$ and $R_2$ are \emph{equivalent} if there are infinitely many directed $R_1$--$R_2$~paths and infinitely many directed $R_2$--$R_1$~paths in $D$.
	
	In the same way, we generalise tracks, separators and dispersedness to digraphs.
	For example, an \emph{$A$--$B$~track} $T$ is a directed $A$--$B$~path, an in-ray in an end in $A$ whose endvertex is in $B$, an out-ray in an end in $B$ whose startvertex is in $A$ or a directed double ray whose initial segment is in an end in $A$ and whose terminal segment is in an end in $B$ such that no internal vertex of $T$ is in $A \cup B$.
	
	Unlike Menger's theorem and its strengthening by Aharoni and Berger, \cref{polat} does not transfer verbatim to digraphs (see~\cref{fig:example}).
	This is due to a structural complication of directed ends:
	Distinct ends $\omega_1, \omega_2$ can contain in-/out-rays $R_1 \in \omega_1$ and $R_2 \in \omega_2$ for which there are infinitely many disjoint $R_1$--$R_2$~paths.
	We denote this by $\omega_1 \leq \omega_2$.
	
	\begin{figure}[ht]
		\centering
		\begin{tikzpicture}
			
			\draw[color=PastelOrange, line width=1,rounded corners=0.3cm] (2.5,-0.3) rectangle (3.5,1.3);
			\draw[PastelOrange] (2.2,0.5) node {$A$};
			
			\foreach \x in {2,...,5}{
				\draw[fill] (1.5*\x,0) circle (1pt);
				\draw[fill] (1.5*\x,1) circle (1pt);
				\draw[edge] ({1.5*\x},0.1) to ({1.5*(\x)},0.9);
			}
			
			\foreach \y in {2,...,4}{
				\draw[edge] ({1.5*\y+0.1},0) to ({1.5*(\y+1)-0.1},0);
				\draw[edge] ({1.5*\y+0.1},1) to ({1.5*(\y+1)-0.1},1);
			}
			
			\draw[path fading=east] (7.6,1) to (9,1);			
			\draw[path fading=east] (7.6,0) to (9,0);
			
			\draw (10,1) node {$R_2 \in \omega_2$};
			\draw (10,0) node {$R_1 \in \omega_1$};
		\end{tikzpicture}
		\caption{There is no $A$--$\{\omega_2\}$~separator of size $1$ and there do not exist two disjoint ${A}$--${\{\omega_2\}}$~tracks.}
		\label{fig:example}
	\end{figure}
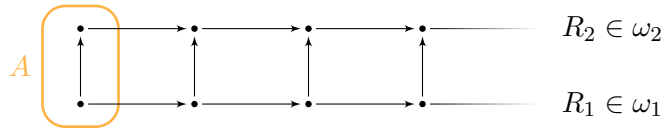
	
	Therefore,
	we have to take into account a more general set of tracks: 
	Let $\Up{A}$ be the union of $A$, the set of ends $\omega$ for which there exists an end $a \in A$ with $a \leq \omega$ and all $A$--\emph{in-dominating}-vertices:
	A vertex $v$ is $A$--\emph{in-dominating} if there exists a ray $R$ in an end of $A$ such that there are infinitely many $R$--$v$~paths that intersect only in $v$.
	Similarly, we define the set $\Down{B}$.
	Since every $A$--$B$~separator must contain a vertex of each $\Up{A}$--$\Down{B}$~track, it is natural to phrase the directed variant of~\cref{polat} as follows:
	
	\begin{restatable}{theorem}{mainTheorem} \label{thm:main}
		Let $D$ be a digraph and let $A$ and $B$ be sets of vertices and ends of $D$ such that $(A,B)$ is dispersed, every end in $A$ contains an in-ray and every end in $B$ contains an out-ray.
		Then the maximum number of disjoint $\Up{A}$--$\Down{B}$~tracks in $D$ is equal to the minimum size of an $A$--$B$~separator in $D$.
	\end{restatable}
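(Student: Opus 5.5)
The inequality ``max $\le$ min'' is the easy direction. Every $A$--$B$~separator $S$ meets every $\Up{A}$--$\Down{B}$~track, so it meets each member of a disjoint family of such tracks in a distinct vertex. To see that $S$ meets every track, argue as follows. If a track $T$ avoided $S$, then its initial segment either lies in an end $\omega\ge a$ for some $a\in A$, or starts at an $A$--in-dominating vertex. In both cases, using the infinitely many disjoint connecting paths and the finiteness of the separator components, we can reroute $T$ into an $A$--$B$~track avoiding $S$. The only exception is when infinitely many of the connecting paths would have to pass through $S$, and this is excluded by the dispersedness of $(A,S)$. The terminal segment is handled symmetrically using $(S,B)$. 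I would record this as a lemma: for any separator $S$, no $\Up{A}$--$\Down{B}$~track lies in $D-S$.

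For ``max $\ge$ min'' I would reduce to the finite Menger theorem for digraphs via a compactness-type argument. The plan is to show that if there are no $k+1$ disjoint $\Up{A}$--$\Down{B}$~tracks, then some $A$--$B$~separator of size at most $k$ exists. Dispersedness gives, for each end $a\in A$, a finite set $S_a$ with no $a$--$B$~track in $D-S_a$, and similarly finite sets $S_b$ for each $b\in B$.

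The key construction is an auxiliary digraph $D'$, formed as follows.
\begin{enumerate}
\item Add a source $s$ with an edge to every vertex of $A$ and to every $A$--in-dominating vertex.
\item For ends in $\Up{A}$, attach $s$ to ``deep'' parts: for each such end $\omega$, choose the strong component $C$ of $D-S_\omega$ containing a tail of $\omega$, and add edges from $s$ to all vertices of $C$.
\item Dually, add a sink $t$, joined from $\Down{B}$ and from such components for ends in $\Down{B}$.
\end{enumerate}
Apply Menger's theorem for infinite digraphs (Aharoni--Berger, or even just the finite-size version, since we only need $k$ finite) to obtain either $k+1$ disjoint $s$--$t$ paths in $D'$ or an $s$--$t$ separator $X$ with $|X|\le k$. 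The choice of components must be made carefully so that each $s$--$t$ path in $D'$ can be converted into an honest $\Up{A}$--$\Down{B}$~track in $D$. The conversion extends the path backwards inside $C$ by an in-ray of $\omega$, which exists since ends in $A$ contain in-rays; ends in $\Up{A}\setminus A$ are reached through the $\le$ relation. Disjointness must be maintained by choosing these rays inside pairwise disjoint deep regions.

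The main obstacle is this conversion step, together with the verification that the resulting $X$ is really an $A$--$B$~separator, i.e.\ that $(A,X)$ and $(X,B)$ are dispersed. For the latter I would use that $X$ is finite: a finite set $X$ is automatically dispersed against any end $a$ which $X$ separates from the relevant rays, since $S_a\cup X$ works as the witness set.

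For the disjointness of the extended rays I would first try to iterate. Process the $k+1$ paths one at a time. At each step, delete the finite set already used, and apply a directed version of the statement that an end $\omega$ with an in-ray admits an in-ray from any component of $D-F$ containing a tail of $\omega$. Because $\omega$ is an equivalence class of rays with infinitely many connecting paths in both directions, this remains true after removing finitely many vertices. If some path uses the ``shortcut'' edges $s\to C$ infinitely often, truncation fixes it, since the paths are finite.

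Finally, I would take a separator of minimum size among those produced. Combining this with the lemma from the first paragraph yields equality.
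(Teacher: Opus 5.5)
Your easy direction is essentially \cref{prop:separator_closure}, and your remark that a finite $X$ is automatically dispersed from $A$ and $B$ is correct. The hard direction, however, has a genuine gap at the conversion step you flag. Choosing the attachment regions ``carefully'' cannot close it without the very result you are trying to avoid.

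Joining $s$ to a whole region of an end $\omega$ forgets how many disjoint in-rays $\omega$ can actually send into that region, so finite Menger in $D'$ overcounts. Note also that ``the strong component of $D-S_\omega$ containing a tail of $\omega$'' is not well defined for directed ends, since a tail of an in-ray need not lie in any non-trivial strong component.

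Here is a concrete failure. Let $D$ be an in-ray $R=\dots v_3v_2v_1$ together with the edges $v_1u_1, v_1u_2, u_1x_1, u_2x_2, x_1b_1, x_2b_2$. Let $\omega$ be the end of $R$, $A=\{\omega\}$ and $B=\{b_1,b_2\}$.
\begin{itemize}
\item Then $\Up{A}=\{\omega\}$ and $\Down{B}=B$. Every track contains a tail of $R$, so no two tracks are disjoint, and $\{v_1\}$ is a separator.
\item Yet $S_\omega=\{x_1,x_2\}$ is a legitimate dispersedness witness, and $D-S_\omega$ is acyclic.
\item The vertices reachable from $\omega$ in $D-S_\omega$ include $u_1$ and $u_2$. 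So under the natural reading of your construction, $D'$ contains the two disjoint paths $su_1x_1b_1t$ and $su_2x_2b_2t$.
\end{itemize}
Your iterative repair does not help either. Once one in-ray of $\omega$ has been chosen, the set to be avoided is infinite, and $\omega$ has no second disjoint in-ray.

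To make $D'$ faithful, you would have to attach $s$ only to a minimum $\Up{A}$--$\Down{B}$ (or $\omega$--$B$) separator. You would also have to know that this separator can be linked to $\Up{A}$ by that many disjoint tracks. That linking statement is exactly the Menger theorem for ``finite vertex set versus ends'', \cref{lem:general}, which is the heart of the paper.

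The paper proves it by adapting the alternating-path technique to out-rays in outer-locally finite digraphs (\cref{thm:alternating}). This means building the forests $T_n$ and showing that the limiting alternating ray belongs to $\Down{B}$. The general case is then reduced to a countable, outer-locally finite digraph.

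With that lemma, the finite case is short. Take a minimum finite $\Up{A}$--$\Down{B}$ separator $S$. Apply \cref{lem:general} to $S$ and $\Down{B}$, and, in the reversed digraph, to $\Up{A}$ and $S$. Then concatenate.

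Finally, your plan only treats a finite maximum. For an infinite one you still need two things: that the supremum is attained, and that a maximal infinite family $\mathcal{F}$ yields a separator of size $|\mathcal{F}|$. The obvious candidate $V(\bigcup\mathcal{F})$ need not be dispersed from the ends in $A$ and $B$. This is why the paper needs envelopes (\cref{lem:existence_separator}) and \cref{thm:small_separator}, which keeps only finite pieces of each track together with finite end-separators.
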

	\noindent
	We remark that \cref{thm:main} implies \cref{polat} since $\Up{A} = \hat{A}$ and $\Down{B} = \hat{B}$ in undirected graphs.
	
	The main challenge in proving~\cref{thm:main} arises from the previously discussed complication of ends of digraphs.
	To overcome this, we adapt Galai's~\cite{grunwald1938neuer} alternating path technique so that we can apply it to a special case of~\cref{thm:main}.
	
	As an application of \cref{thm:main} we characterise the combined degree of ends of digraphs, which was recently introduced by Hamann and Heuer~\cite{hamann2024end}, by a family of disjoint tracks that `dominate' the end.
	Furthermore, we will prove that $A$--$B$~double rays are ubiquitous (see~\cite{gut2024ubiquity} for an introduction to ubiquity).
	
	Finally, we raise the question whether \cref{thm:main} can be strengthened in the style of Aharoni and Berger's result:
	
	\begin{problem}
		In the setting of~\cref{thm:main}, does there exist a family $\mathcal{T}$ of disjoint $\Up{A}$--$\Down{B}$~tracks and an $A$--$B$~separator $S$ such that $S$ contains precisely one vertex of each track in $\mathcal{T}$?
	\end{problem}
	
	This paper is structured as follows:
	We begin by introducing basic notations in~\cref{sec:prelim}.
	In~\cref{sec:special_case} we present the adapted alternating path technique and apply it to a special case of~\cref{thm:main}.
	We generalise the results from~\cref{sec:special_case} to arbitrary digraphs in~\cref{sec:generalise}.
	In~\cref{sec:existence} we prove the existence of envelopes and certain $A$--$B$~separators.
	Finally, we prove~\cref{thm:main} in~\cref{sec:main_proof} and use~\cref{thm:main} to characterise the combined end degree in~\cref{sec:combined_end_degree}.

	\section{Basic notation} \label{sec:prelim}
	For standard notation we refer to Diestel's book~\cite{diestel2024graph}.
	Let $\N := \{1, 2, \dots\}$ and $[n] := \{1, \dots, n\}$ for $n \in \N$.
	A digraph $D$ is \emph{outer-locally finite} if every vertex of $D$ has finite out-degree.
	
	A \emph{path} is a digraph whose underlying undirected graph is a path and whose edges are oriented away from its startvertex.
	The terms \emph{startvertex}, \emph{endvertex} and \emph{internal vertex} transfer from its underlying undirected graph to its directed counterpart.
	Given sets of vertices $A$ and $B$, a path $P$ is an \emph{$A$--$B$~path} if $P$ intersects $A$ precisely in its startvertex and $P$ intersects $B$ precisely in its endvertex..
	
	An \emph{in-ray} is a digraph whose underlying undirected graph is a ray and whose edges are oriented towards the unique vertex of degree $1$.
	We refer to the unique vertex of degree~$1$ as the \emph{endvertex} of the in-ray and call all other vertices \emph{internal vertices}.
	Similarly, an \emph{out-ray} is a digraph whose underlying undirected graph is a ray and whose edges are oriented away from the unique vertex of degree~$1$.
	The unique vertex of degree~$1$ is called \emph{startvertex} and we refer to all other vertices as \emph{internal vertices}.
	Given some in-ray (out-ray) $R$, every in-ray (out-ray) in $R$ is a \emph{tail} of $R$.
	
	We say that two in-/out-rays $R_1$ and $R_2$ are equivalent, if there are infinitely many disjoint $R_1$--$R_2$~paths and infinitely many disjoint $R_2$--$R_1$~paths.
	An \emph{end} of a digraph $D$ is an equivalence class of in- and out-rays and we let $\Omega(D)$ be the set of ends of $D$.
	Given two ends $\omega_1$ and $\omega_2$, we write $\omega_1 \leq \omega_2$ if there exist in-/out-rays $R_1 \in \omega_1$ and $R_2 \in \omega_2$ such that there are infinitely many disjoint $R_1$--$R_2$~paths.
	Let $A$ be a set of vertices and ends.
	A vertex $v$ is $A$--\emph{in-dominating} if there exists an in-/out-ray $R$ in an end of $A$ such that there are infinitely many $R$--$v$~paths that intersect only in $v$.
	We set $\Up{A}$ to be the union of $A$, the set of ends $\omega$ for which there exists an end $a \in A$ with $a \leq \omega$ and all $A$--in-dominating vertices.
	Similarly, we define $A$--\emph{out-dominating} vertices and $\Down{A}$.
	
	A \emph{directed double ray} is a digraph whose underlying undirected graph is a double ray and whose edges are consistently oriented.
	We call every vertex of a directed double ray an \emph{internal vertex}.
	Let $A$ and $B$ be sets of vertices and ends.
	An \emph{$A$--$B$~track} $T$ is an $A$--$B$~path, an in-ray in an end in $A$ whose endvertex is in $B$, an out-ray in an end in $B$ whose startvertex is in $A$ or a directed double ray whose initial segment is in an end in $A$ and whose terminal segment is in an end in $B$ such that no internal vertex of $T$ is in $A \cup B$.
	Given some vertex or end $a$, we simply refer to $\{a\}$--$B$~tracks as \emph{$a$--$B$~tracks} and so on.
	
	An \emph{in-arborescence} is a digraph $T$ whose underlying undirected graph is a tree such that all edges of $T$ are oriented towards some fixed vertex in $T$, which we call the \emph{root} of $T$.
	Similarly, we define \emph{out-arborescences}.
	
	A tuple $(A,B)$ of sets of vertices and ends of a digraph $D$ is \emph{dispersed} if for every end $a \in A \cap \Omega(D)$ there is a finite set $S_a$ such that there is no $a$--$B$~track in $D - S_a$ and if for every end $b \in B \cap \Omega(D)$ there is a finite set $S_b$ such that there is no $A$--$b$~track in $D - S_b$.
	A set $S \subseteq V(D)$ is an \emph{$A$--$B$~separator} if there is no $A$--$B$~track in $D - S$ and the tuples $(A,S)$ and $(S,B)$ are dispersed.
	The latter condition of $A$--$B$~separator ensures that $S$ does not `converge' to an end in $A$ or $B$.
	We remark that $(A,S)$ and $(S,B)$ are always dispersed if $S$ is finite.
	
	\section{A special case} \label{sec:special_case}
	In this section, we prove a special case of~\cref{thm:main}:
	\begin{theorem}\label{thm:alternating}
		Let $D$ be an outer-locally finite digraph, let $A \subseteq V(D)$ be finite and let $B \subseteq \Omega(D)$ such that every end in $B$ contains an out-ray.
		Then the maximum number of disjoint $A$--$\Down{B}$~rays is equal to the minimum size of an $A$--$B$~separator.
	\end{theorem}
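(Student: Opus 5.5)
The inequality ``maximum $\le$ minimum'' should be routine. First, $A$ itself is an $A$--$B$~separator, since $A$ is finite and so dispersedness is automatic. Hence the minimum is at most $|A|$, and a minimum separator $S$ is finite. Next, every $A$--$\Down{B}$~ray $R$ meets $S$. Suppose $R$ lies in $\omega\le b$ for some $b\in B$ and avoids $S$. Since $S$ is finite, a tail of $R$ and a tail of an out-ray $R'\in b$ avoid $S$. Some of the infinitely many disjoint $R$--$R'$~paths also avoid $S$. Following $R$ and then such a path onto $R'$ gives an $A$--$b$~track in $D-S$, a contradiction. (The inequality $\omega\le b$ only gives paths between some rays of $\omega$ and $b$, but equivalence within an end transfers this to $R$.) I would also note at this point why only rays occur in the statement: in an outer-locally finite digraph no vertex is $B$--out-dominating, because that would require infinitely many paths from $v$ meeting only in $v$, hence infinite out-degree. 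So $\Down{B}\setminus B$ consists only of ends $\omega\le b$.

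For the reverse inequality I would adapt Gallai's alternating path technique. Fix a family $\mathcal{R}$ of $k$ disjoint $A$--$\Down{B}$~rays, with $k$ maximum; this is possible since $k\le|A|$. Call a walk \emph{alternating} with respect to $\mathcal R$ if it starts in a vertex of $A$ not used by $\mathcal R$, uses edges outside $\mathcal R$ forwards and edges of $\mathcal R$ backwards, and on entering a ray at an internal vertex immediately steps backwards along it; these are the usual conventions. For each $R\in\mathcal R$, let $X_R$ be the set of vertices of $R$ reachable by alternating walks. I would then prove two claims.

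\textbf{Claim 1.} Each $X_R$ is finite. Let $x_R$ be its last vertex, or the startvertex of $R$ if $X_R=\emptyset$.

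\textbf{Claim 2.} $S:=\{x_R: R\in\mathcal R\}$ is an $A$--$B$~separator. Since $|S|=k$ and $S$ is finite, this finishes the proof.

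For Claim~2, suppose there is an $A$--$b$~track $T$ (an out-ray in $b$ starting in $A$) avoiding $S$. The standard Gallai argument shows that $T$, combined with alternating walks, yields an alternating walk from an unused start vertex which reaches, beyond every $x_R$, a tail of $T$ that is disjoint from all vertices used. Taking symmetric differences produces $k+1$ disjoint rays starting in $A$. The point that needs care is that the rerouted rays are spliced together from pieces of different rays. Each new ray has a tail equal to a tail of some $R\in\mathcal R$ or of $T$, so it lies in an end of $\Down{B}$ (ends of the $R$, or $b$ itself). This is exactly why the statement uses $\Down{B}$ rather than $B$. The resulting $k+1$ rays contradict maximality.

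I expect Claim~1 to be the main obstacle. Suppose some $X_R$ is infinite, so alternating walks reach arbitrarily far along $R$. I would use outer-local finiteness and König's lemma, applied to the out-arborescence of alternating walks (finitely many out-edges per vertex, and $\mathcal R$ is finite), to extract a single infinite alternating walk. This walk must eventually leave every finite part of $D$ and run backwards along infinitely many segments of rays in $\mathcal R$. The task is then to turn it into an augmenting structure: $k+1$ disjoint rays in $\Down{B}$, one of them following the walk towards infinity. The difficulty is controlling the end of that limit ray. I would argue that it has infinitely many disjoint paths to some $R\in\mathcal R$, whose end lies in $\Down{B}$, so the new ray lies in an end $\omega\le b$ and is again admissible. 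If this end bookkeeping fails directly, the fallback is to choose $\mathcal R$ to additionally minimise a suitable potential, for instance by fixing nested finite initial segments, so that an infinite $X_R$ can be ruled out by contradiction.
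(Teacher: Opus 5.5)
You handle the easy direction correctly, including the observation that in an outer-locally finite digraph $\Down{B}\setminus B$ contains no vertices. Claim~2 is a correct adaptation of Gallai's argument \emph{provided} Claim~1 holds: finiteness of the $X_R$ forces $T$ to meet $\mathcal R$ only finitely often, so you get an alternating walk continuing along a tail of $T$. The genuine gap is Claim~1, which carries essentially all of the difficulty, and your K\"onig argument does not prove it. An arbitrary infinite branch of the tree of alternating walks need neither run backwards along $\mathcal R$ infinitely often nor have infinitely many disjoint paths to $\mathcal R$. This remains true even if you keep only walks that can still be extended to reach $R$.

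For example, let $R=a_1r_1r_2\dots$ and add an out-ray $a_2w_1w_2\dots$, edges $w_iz$ for all $i$, an out-ray $zu_1u_2\dots$ and edges $u_ir_i$. Take $A=\{a_1,a_2\}$, $B$ consisting of the end of $R$, and $\mathcal R=\{R\}$. Every vertex of $R$ is reachable, and every prefix of $a_2w_1w_2\dots$ extends to a walk reaching arbitrarily far along $R$, so K\"onig may return $a_2w_1w_2\dots$. All paths from this ray to $R$ pass through $z$, so its end is not in $\Down{B}$ and no augmentation results. Yet the augmenting ray $a_2w_1zu_1u_2\dots$ exists. The choice of the limit ray must therefore be controlled, and your `fallback' with a potential does not supply this control.

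The paper supplies the control by proving a dichotomy instead of defining the separator Gallai-style. Either there is an $\mathcal R$-alternating ray belonging to $\Down{B}$ (meeting $\mathcal R$ infinitely often or with a tail in $\Down{B}$), or there is an $A$--$\Down{B}$~separator of size $|\mathcal R|$. Assuming no such separator, the paper grows finite forests of alternating walks, with $u_R$ the last explored vertex of $R$. If some edge of some $Ru_R$ is unexplored, it takes a further backward step. Otherwise $\{u_R: R\in\mathcal R\}$ is not a separator, so some $A$--$\Down{B}$~ray $S$ avoids it. One then grafts the segment of $S$ from its last explored vertex $x$ to its first $\mathcal R$-vertex $y$ after $x$; this $y$ is necessarily new, and if no such $y$ exists the alternating ray is already found.

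This ensures that every newly added vertex has a path to $V(\mathcal R)$ inside the newly added vertices. Hence every ray of the K\"onig limit either uses infinitely many edges of $\mathcal R$, or reaches $V(\mathcal R)$ while avoiding any given finite set. In the latter case it has infinitely many disjoint paths to some $R\in\mathcal R$ and lies in $\Down{B}$. Dead ends like $a_2w_1w_2\dots$ never enter this forest. With the dichotomy and maximality of $\mathcal R$, the separator comes for free, and neither Claim~1 nor Claim~2 is needed.
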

	\noindent
	For the proof of~\cref{thm:alternating} we adapt Galai's~\cite{grunwald1938neuer} alternating path technique (see \cite{diestel2024graph}*{Section 3.3} for a summary) to out-rays.
	
	Let $D$ be an outer-locally finite digraph and let $\mathcal{R}$ be a finite family of disjoint $A$--$B$~rays for some finite set $A \subseteq V(D)$ and some set $B \subseteq \Omega(D)$.
	We call a sequence $x_0e_1x_1e_2x_2 \dots e_n x_n$ an \emph{$\mathcal{R}$-alternating path} if it induces a trail in the underlying undirected graph that starts in $A \setminus V(\mathcal{R})$ and for every $i < n$:
	\begin{enumerate}[label=(\roman*)]
		\item if $e_i \in E(\mathcal{R})$, then $e_i$ has head in $x_{i-1}$,
		\item if $e_i \notin E(\mathcal{R})$, then $e_i$ has head in $x_{i}$,
		\item if $x_i = x_j$ for $i \neq j$, then $x_i \in V(\mathcal{R})$, and
		\item if $x_i \in V(\mathcal{R})$, then $\{e_{i-1}, e_i\} \cap E(\mathcal{R}) \neq \emptyset$.
	\end{enumerate}
	We call an infinite sequence $S:= v_0e_1v_1e_2v_2 \dots$ an \emph{$\mathcal{R}$--alternating ray} if all initial segments of $S$ are $\mathcal{R}$--alternating paths.
	Furthermore, we say that $S$ \emph{belongs to $\Down{B}$} if either $S$ intersects $\mathcal{R}$ infinitely often or $S$ contains a terminal segment that is an out-ray in $\Down{B}$.
	
	\begin{lemma}\label{lem:alternating_ray}
		If there exists an $\mathcal{R}$--alternating ray that belongs to $\Down{B}$, then there is a family of $|\mathcal{R}|+1$ disjoint $A$--$\Down{B}$~rays.
	\end{lemma}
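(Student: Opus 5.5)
We follow the classical Gallai augmentation argument (cf.\ \cite{diestel2024graph}*{Lemma 3.3.2}), adapted to out-rays. Let $W = v_0e_1v_1e_2\dots$ be an $\mathcal{R}$--alternating ray belonging to $\Down{B}$. Consider the edge set
\[ H := \bigl(E(\mathcal{R}) \setminus E(W)\bigr) \cup \bigl(E(W)\setminus E(\mathcal{R})\bigr) \]
together with the vertices $A \cup V(\mathcal{R}) \cup V(W)$. We will show that $H$ contains $|\mathcal{R}|+1$ disjoint out-rays, each starting in $A$, and then check that each has a tail in an end of $\Down{B}$.

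First we analyse local degrees in $H$. The vertices $v_0$ and the startvertices of the rays in $\mathcal{R}$ lie in $A$ and have in-degree $0$ and out-degree $1$ in $H$. Here $v_0\notin V(\mathcal{R})$, and the first $\mathcal{R}$-edge at a startvertex $a$ of some $R\in\mathcal{R}$ cannot be traversed backwards by $W$: by (i) that would force $W$ to continue backwards beyond $a$, or to leave $a$ via a non-$\mathcal{R}$-edge into $a$, which is excluded by the orientation rule (ii). We may also assume that $W$ never revisits $A\setminus V(\mathcal{R})$ after $v_0$, by truncating or rechoosing the start. Every other vertex of $H$ has in-degree and out-degree both at most $1$. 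This follows from conditions (i)--(iv): when $W$ passes through a vertex of $\mathcal{R}$, it either uses both adjacent $\mathcal{R}$-edges backwards, or enters along a non-$\mathcal{R}$ edge and leaves backwards along an $\mathcal{R}$-edge, or the reverse. In each case the in- and out-edge sets are swapped consistently. Vertices of $W$ outside $V(\mathcal{R})$ are visited only once by (iii), and $W$ traverses them forwards. Hence every component of $H$ is a directed path, cycle, ray or double ray. Exactly $|\mathcal{R}|+1$ components have a vertex of in-degree $0$ in $A$, so these components are paths or out-rays starting in $A$.

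The main obstacle is showing that these $|\mathcal{R}|+1$ components are infinite and that each eventually lies in an end of $\Down{B}$. Finiteness is excluded because a finite path component would have to end at a vertex of out-degree $0$. Such a vertex would be either a vertex where $W$ ends, which is impossible since $W$ is infinite, or a vertex with an unbalanced count, and the degree analysis rules this out. For the tails, we argue with the outer-local finiteness and the finiteness of $\mathcal{R}$. If $W$ has a terminal segment that is an out-ray in $\Down{B}$ and avoids $\mathcal{R}$, then the component containing that segment has it as a tail. Every other component of $H$ agrees eventually with a tail of some $R\in\mathcal{R}$, because $W$ uses only finitely many edges of each ray before leaving $\mathcal{R}$ for good. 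Thus its tail lies in an end of $B\subseteq\Down{B}$.

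If instead $W$ meets $\mathcal{R}$ infinitely often, then some $R\in\mathcal{R}$ is met infinitely often. Here we need a careful argument that the new components still have tails equivalent to rays in $\Down{B}$. The pieces of $W$ between successive visits to $\mathcal{R}$ give infinitely many paths between rays of $\mathcal{R}$, and since the visits move forward along the rays (by outer-local finiteness only finitely many backward steps are possible before returning), each component is either a tail of some $R$ or is joined to such tails by infinitely many disjoint paths in both directions. We then verify that such a component lies in the end of some $R$, or in an end $\omega$ with $\omega\le b$ for some $b\in B$, hence in $\Down{B}$. I expect this verification of ends in the case of infinitely many intersections to be the delicate step. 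I would first try to handle it by passing to a subsequence of visits to a single ray $R$ and using the disjoint connecting paths supplied by $W$ to establish the relation $\le$ with the end of $R$.
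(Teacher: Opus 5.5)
\textbf{There is a genuine gap.} Your setup is the paper's: you take $H$ as the symmetric difference, show the $|\mathcal{R}|+1$ sources in $A$ start out-rays, and split on whether $W$ meets $\mathcal{R}$ finitely or infinitely often. The finite case is handled as in the paper.

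The case where $W$ meets $\mathcal{R}$ infinitely often is not proved. You call it ``the delicate step'' and sketch a plan: outer-local finiteness bounding backward steps, subsequences of visits, and connecting paths supplied by $W$. This plan does not find the mechanism. Outer-local finiteness plays no role in this lemma, and the pieces of $W$ are not the right objects; you should look at the components $R_C$ themselves. Take a vertex $v\notin V(\mathcal{R})$ with $v\neq v_0$. Its unique in-edge and out-edge in $H$ are the two consecutive edges of $W$ at its unique visit to $v$, by (iii). Hence every stretch of $R_C$ between consecutive vertices of $V(\mathcal{R})$ is a segment of $W$ between consecutive visits of $W$ to $V(\mathcal{R})$. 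An infinite stretch avoiding $V(\mathcal{R})$ would be a tail of $W$ avoiding $\mathcal{R}$, which this case excludes. So the out-ray $R_C$ meets $V(\mathcal{R})$ in infinitely many vertices. Since $\mathcal{R}$ is finite, $R_C$ shares infinitely many vertices with a single $R\in\mathcal{R}$. These shared vertices are infinitely many disjoint trivial paths in both directions, so $R_C$ lies in the end of $R$, which is in $\Down{B}$. This is the paper's one-line argument (``every $R_C'$ intersects $\mathcal{R}$ infinitely often''), and without it your proof is incomplete in this case.

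\textbf{A local error.} You argue that a startvertex $a$ of $R\in\mathcal{R}$ has in-degree $0$ in $H$ because the first edge $ar_1$ cannot be traversed backwards. That is false: $W$ may traverse $ar_1$ backwards and then leave $a$ forwards along a non-$\mathcal{R}$-edge, as (i) and (ii) allow. What is impossible is that $W$ \emph{enters} $a$ along a non-$\mathcal{R}$-edge. By (iv) it would then have to leave along a backwards $\mathcal{R}$-edge, and the only $\mathcal{R}$-edge at $a$ points away from $a$. In either situation $a$ has in-degree $0$ and out-degree $1$ in $H$, so your conclusion survives, but the reason must be replaced.

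Your balance claim also needs one more remark. A vertex of $\mathcal{R}$ is visited at most twice, since each visit uses at least one of its at most two $\mathcal{R}$-edges and $W$ is a trail. When it is visited twice, it is entered backwards along an $\mathcal{R}$-edge on one visit and forwards along a non-$\mathcal{R}$-edge on the other, which again leaves in- and out-degree $1$.
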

	
	\begin{proof}
		Let $S$ be some $\mathcal{R}$--alternating ray that belongs to $\Down{B}$.
		We consider the subgraph $H$ induced by the symmetric difference of $E(S)$ and $E(\bigcup \mathcal{R})$.
		Note that $H$ has $|\mathcal{R}|+1$ weak components containing vertices of $A$.
		Note further that each such weak component $C$ is an out-ray $R_C$.
		In particular, $R_C$ has a tail $R_C'$ that intersects $A$ only in its startvertex.
		
		If $S$ intersects $\mathcal{R}$ infinitely often, then every $R_C'$ intersects $\mathcal{R}$ infinitely often.
		In particular, every $R_C'$ is in $\Down{B}$.
		Otherwise, there is a terminal segment $S'$ of $S$ that is a ray in $\Down{B}$.
		Note that every $R_C'$ has a tail in a ray of $\mathcal{R} \cup \{S'\}$ and thus is in $\Down{B}$.
	\end{proof}
	
	\begin{lemma}\label{lem:alternating_ray_or_separator}
		Either there exists a $\mathcal{R}$--alternating ray that belongs to $\Down{B}$ or an $A$--$\Down{B}$~separator of size $|\mathcal{R}|$.
	\end{lemma}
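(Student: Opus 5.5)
We mimic the separator construction in the finite Gallai argument: collect the vertices reachable by $\mathcal{R}$-alternating paths and pick one vertex on each ray of $\mathcal{R}$. Assume no $\mathcal{R}$--alternating ray belongs to $\Down{B}$.

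\emph{The set $S$.} Let $X$ be the set of all vertices $x$ that are the last vertex $x_n$ of some $\mathcal{R}$-alternating path. For each ray $R \in \mathcal{R}$, consider the vertices of $R$ lying in $X$. We first show that only finitely many vertices of $R$ lie in $X$. Suppose instead that infinitely many vertices of $R$ are reachable. Since $D$ is outer-locally finite and $A$ is finite, the alternating paths form a locally finite structure: a vertex has only finitely many out-neighbours, and backward steps run only along edges of $\mathcal{R}$, where each vertex has at most one predecessor. Hence a K\"onig-type compactness argument applied to the tree of alternating paths yields an infinite $\mathcal{R}$-alternating ray. We must then argue that it can be chosen to meet $\mathcal{R}$ infinitely often, or to end in a ray of $\Down{B}$; either way it belongs to $\Down{B}$, a contradiction. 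So for each $R$ we let $s_R$ be the last vertex of $R$ in $X$, and if $R$ meets $X$ only in nothing beyond its start, we take $s_R$ to be its startvertex. Set $S := \{s_R : R\in\mathcal{R}\}$, so $|S| = |\mathcal{R}|$. Since $S$ is finite, the dispersedness conditions on $(A,S)$ and $(S,B)$ hold automatically.

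\emph{$S$ separates.} Suppose there is an $A$--$\Down{B}$ ray $Q$ in $D - S$. If $Q$ avoids $V(\mathcal{R})$ after some point, the standard finite argument applies. We walk along $Q$ from its start. If $Q$ starts in $A\setminus V(\mathcal{R})$, it is itself alternating until it hits $\mathcal{R}$. If it starts on some $R$ at a vertex of $A$, that vertex is reachable, or else we pass to the first vertex of $Q$ beyond the $s_R$'s. In each case we show by induction that every vertex of $Q$ is in $X$ and, more precisely, lies on $R$ before $s_R$ whenever it lies on $\mathcal{R}$. The last point of $Q$ in $X\cap V(R)$ lies strictly before $s_R$. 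When $Q$ leaves $R$ at a vertex $v$, we reach $v$ by an alternating path, possibly stepping backwards along $R$ from $s_R$, and then continue along $Q$. This mirrors the case analysis in Gallai's proof via conditions (i)--(iv).

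The conclusion is that the whole tail of $Q$ lies in $X$. Using compactness as above, we obtain an alternating ray containing a tail of $Q$. Such a ray belongs to $\Down{B}$, which is a contradiction. If instead $Q$ meets $\mathcal{R}$ infinitely often, it passes through infinitely many vertices of some $R$ in $X$, contradicting finiteness.

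\emph{Main obstacle.} The delicate step is the compactness argument: from infinitely many reachable vertices, or from a reachable ray $Q$, we must produce a genuine infinite alternating sequence satisfying (iii) and (iv). These conditions are not closed under taking limits in an obvious way, because the alternating paths reaching different points may differ. I would handle this by building the alternating ray greedily, always extending along $Q$ or along $\mathcal{R}$. To make this work, I would prove a monotonicity lemma: if a vertex $v\in V(R)$ is reachable, then every vertex of $R$ before $v$ is reachable by an alternating path extending the one reaching $v$. With that lemma, the reachable paths can be chosen to be nested.
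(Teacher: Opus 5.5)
\textbf{There is a genuine gap, and it is the step you flag yourself.} Your construction of $S$ depends on each $R\in\mathcal{R}$ containing only finitely many vertices of $X$. K\"onig's lemma on the tree of $\mathcal{R}$-alternating paths gives you \emph{some} alternating ray, but nothing makes it belong to $\Down{B}$.

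For example, take $\mathcal{R}=\{R\}$ with $R=r_0r_1r_2\dots$ and a vertex $a\in A\setminus V(R)$ starting an out-ray $W=aw_1w_2\dots$. Add edges $w_iy$ into a single vertex $y$, and an out-ray $yy_1y_2\dots$ with edges $y_jr_j$. Every $r_j$ is reachable, and every initial segment of $W$ extends to an alternating path ending arbitrarily far out on $R$. So the branch K\"onig hands you may be $W$ itself. But $W$ never meets $\mathcal{R}$, and all its paths to $R$ pass through $y$, so it need not lie in an end of $\Down{B}$.

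Your proposed remedy does not fix this.
\begin{itemize}
\item The monotonicity lemma is false as stated. A path can run backwards through $v$ to its predecessor $v^-$ on $R$, leave $R$ at $v^-$, and return to $v$ along a non-$\mathcal{R}$ edge. After that the edge $v^-v$ is used up, so the path cannot be extended backwards from $v$.
\item Even where the lemma holds, it makes paths to \emph{earlier} vertices of $R$ extend paths to \emph{later} ones. That yields no increasing chain and no rule for choosing the branch.
\end{itemize}
The separation step reuses the same unproved compactness, together with an induction that all of $Q$ lies in $X$, which you only gesture at. When $Q$ eventually avoids $\mathcal{R}$, compactness is not even needed there: take one finite alternating path to a suitable vertex of $Q$, cut it at the last vertex of $Q$ it meets, and follow the rest of $Q$.

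\textbf{What is missing} is a controlled way of growing the alternating paths so that every limit branch is forced into $\Down{B}$. The paper gets this by arguing the other way round. It assumes there is no $A$--$\Down{B}$ separator of size $|\mathcal{R}|$ and builds nested finite forests of alternating paths. With $u_R$ the last vertex of $R$ reached so far, each stage does one of two things:
\begin{itemize}
\item it adds one backward step along an unused edge of $Ru_R$; or
\item it uses that $\{u_R: R\in\mathcal{R}\}$ does not separate to get an $A$--$\Down{B}$ ray avoiding it. It then attaches the segment of this ray from its last already-reached vertex $x$ to its next vertex $y$ on $V(\mathcal{R})$. If no such $y$ exists, the concatenation is already an alternating ray belonging to $\Down{B}$.
\end{itemize}
Each added piece consists of new vertices and ends on $V(\mathcal{R})$. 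Hence every vertex added at a stage has a path to $V(\mathcal{R})$ inside that stage's new vertices; this is property (d).

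Now take a K\"onig branch whose image eventually leaves $\mathcal{R}$, and any finite set $Z$. A vertex of the branch added late enough has its stage path to $V(\mathcal{R})$ avoiding $Z$. So the branch has infinitely many disjoint paths to some $R$, and therefore lies in $\Down{B}$.

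In this argument, the failure of separation drives the construction. The paper never needs your reachable set $X$, the finiteness of $X\cap V(R)$, or a Gallai-type case analysis. To save your route you would have to prove that finiteness claim, and doing so essentially requires this same construction.
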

	
	\begin{proof}
		We assume that there does not exist an $A$--$\Down{B}$~separator of size $|\mathcal{R}|$ and show that there is a $\mathcal{R}$--alternating ray that belongs to $\Down{B}$.
		
		\begin{claim}\label{claim1}
			There exists an $\mathcal{R}$-alternating ray belonging to $\Down{B}$ or a sequence of finite, undirected forests $(T_n)_{n \in \N}$ rooted in $A$ together with maps $f_n: V(T_n) \mapsto V(D)$, $g_n: E(T_n) \mapsto E(D)$ such that for every $n > 1$:
			
			\begin{enumerate}[label=(\alph*)]
				\item\label{itm:tree1} the image $f(v_0) g(e_1) f(v_1) \dots g(e_n) g(v_n)$ of every rooted path $v_0 e_1 v_1 \dots e_n v_n$ in $T_n$ is an $\mathcal{R}$-alternating ray,
				\item\label{itm:tree2} $T_{n-1} \subset T_n$, $f_n \vert_{V(T_{n-1})} = f_{n-1}$ and $g_n \vert_{E(T_{n-1}))} = g_{n-1}$,
				\item\label{itm:tree3} $g_n$ is injective, and
				\item\label{itm:tree4} there is a $v$-$\mathcal{R}$~path in $D[f_n(V(T_n)) \setminus f_n(V(T_{n-1}))]$ for every $v \in f_n(V(T_n)) \setminus f_n(V(T_{n-1}))$.
			\end{enumerate}
		\end{claim}
		\begin{claimproof}
			We set $T_1:= (A, \emptyset)$, let $f_1$ be the identity and refer to the vertices in $A$ as the roots.
			We assume that $T_n$ has been constructed for some $n \in \N$.
			For every $R \in \mathcal{R}$ let $u_R$ be the last vertex of $R$ in $f_n(V(T_n))$.
			If $\bigcup_{R \in \mathcal{R}} E(Ru_R) \setminus g_n(E(T_n)) \neq \emptyset$, then there is $uv \in \bigcup_{R \in \mathcal{R}} E(Ru_R) \setminus g_n(E(T_n))$ such that $v \in f_n(V(T_n))$.
			We pick a rooted path $P$ in $T_n$ whose endvertex $z$ satisfies $f_n(z)=v$.
			We set $T_{n+1}$ to be the forest obtained from $T_n$ by adding $uv$ on top of $P$, i.e. adding a vertex $z'$ and an edge $zz'$ with $f_{n+1}(z') = u$ and $f_{n+1}(zz') = uv$.
			Note that this construction satisfies~\labelcref{itm:tree4} since $u,v \in V(\mathcal{R})$.
			
			Otherwise, i.e. if $\bigcup_{R \in \mathcal{R}} E(Ru_R) \subseteq g_n(E(T_n))$, then $f_n(V(T_n)) \cap \bigcup_{R \in \mathcal{R}} V({R})= \bigcup_{R \in \mathcal{R}} V(Ru_R)$ and for every $x \in \bigcup_{R \in \mathcal{R}} V(Ru_r) \setminus \{u_r\}$ there is a rooted path in $T_n$ such that its endvertex $z$ satisfies $f_n(z)=x$ and its final edge $e$ has the property that $g_n(e) \in \bigcup_{R \in \mathcal{R}} E(Ru_R)$.
			Since $\{u_{R}: R \in \mathcal{R}\}$ is not an $A$--$\Down{B}$~separator but $(A,S)$ and $(S,B)$ are dispersed, there exists an $A$--$\Down{B}$~ray $S$ avoiding $\{u_{R}: R \in \mathcal{R}\}$.
			Let $x$ be the last vertex of $S$ in $f_n(V(T_n))$.
			We choose a rooted path $P'$ in $T_n$ whose endvertex $z$ satisfies $f_n(z)=x$ and additionally, if $x \in \bigcup_{R \in \mathcal{R}} V(Ru_R) \setminus \{u_R\}$, its final edge $e$ satisfies $g_n(e) \in \bigcup_{R \in \mathcal{R}} E(Ru_R)$.
			Let $\hat{P}$ be the image of $P'$ as in \labelcref{itm:tree1}.
			
			We can assume that the tail $xS -x$ intersects $\bigcup_{R \in \mathcal{R}} V(R)$, since otherwise the concatenation of $\hat{P}$ and $xS$ is an $\mathcal{R}$-alternating ray belonging to $\Down{B}$ and we are done.
			Let $y$ be the first vertex of $x S -x$ in $\bigcup_{R \in \mathcal{R}} V(R)$.
			Then the concatenation of $\hat{P}$ and $xSy$ is a $\mathcal{R}$-alternating path.
			Let $T_{n+1}$ be the tree obtained from $T_n$ by adding $xSy$ on top of $P'$, i.e. attaching a path $Q$ to the endvertex of $P'$ such that $f_{n+1}$ and $g_{n+1}$ applied to $Q$ induce $xSy$.
			Note that this construction satisfies \labelcref{itm:tree4} since $xSy$ intersects $f_{n+1}(V(T_n))$ only in $x$.
			This completes the construction of $(T_n)_{n \in \N}$.
		\end{claimproof}
		
		By~\Cref{claim1}, we can assume that there is a sequence $(T_n)_{n \in \N}$ as stated.
		Note that every two edge $e,f \in E(T_n)$ with the same tail have the property that $g_n(e)$ and $g_n(f)$ have the same tail.
		This implies that $\bigcup_{n \in \N} T_n$ is outer-locally finite by~\labelcref{itm:tree3} and since $D$ is outer-locally finite.
		By~\labelcref{itm:tree2}, $\bigcup_{n \in \N} T_n$ has an infinite component and thus $\bigcup_{n \in \N} T_n$ contains a rooted ray $S$.
		The rooted ray $S$ induces a $\mathcal{R}$-alternating ray $\hat{S}$ through the maps $(f_n)_{n \in \N}$ and $(g_n)_{n \in \N}$ by~\cref{itm:tree1}.
		
		It remains to prove that $\hat{S}$ belongs to $\Down{B}$.
		If $\hat{S}$ contains infinitely many edges of $\mathcal{R}$, then $\hat{S}$ belongs to $\Down{B}$ by definition.
		Thus there is a tail $S'$ of $S$ such that $(f_n)_{n \in \N}$ and $(g_n)_{n \in \N}$ map $S'$ to a ray $\hat{S}'$ in $D$.
		We show that there is an $\hat{S}'$--$V(\mathcal{R})$~path in $D - X$ for every finite set $X \subseteq V(D)$.
		Then there is $R \in \mathcal{R}$ such that there are infinitely many disjoint $\hat{S}'$--$R$~paths, which implies that $\hat{S}' \in \Down{B}$ and thus $\hat{S}$ belongs to $\Down{B}$.
		
		Let $X \subseteq V(D)$ be an arbitrary finite set.
		Let $m \in \N$ such that $X \cap \bigcup_{n \in \N} f_n(V(T_n)) = X \cap \bigcup_{n \leq m} f_n(V(T_n))$.
		Since $V(\hat{S}')$ is infinite and $f_m(V(T_m))$ is finite, there exists $\ell>m$ for which there is $v \in (f_\ell(V(T_{\ell})) \setminus f_\ell(V(T_{\ell -1 }))) \cap V(\hat{S}')$.
		By \labelcref{itm:tree4}, there is a $v$-$\mathcal{R}$~path $\mathcal{P}$ in $D[f_\ell(V(T_\ell)) \setminus f_\ell(V(T_{\ell-1}))]$.
		Thus $\mathcal{P}$ avoids $X$ by the choice of $m$.
		This completes the proof.
	\end{proof}
	
	Now, \Cref{lem:alternating_ray,lem:alternating_ray_or_separator} prove~\cref{thm:alternating}.
	
	\section{Generalising the special case} \label{sec:generalise}
	In this section we generalise \cref{thm:alternating} to general, not necessarily outer-locally finite, digraphs $D$ and sets $B$ that also contain vertices:
	\begin{lemma}\label{lem:general}
		Let $D$ be a digraph, let $A \subseteq V(D)$ be finite and let $B \subseteq V(D) \cup \Omega(D)$ such that each end in $B$ contains an out-ray.
		Then the maximum number of disjoint $A$--$\Down{B}$~tracks is equal to the minimum size of an $A$--$B$~separator.
	\end{lemma}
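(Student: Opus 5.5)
We reduce \cref{lem:general} to \cref{thm:alternating} by modifying $D$ into an outer-locally finite digraph $D'$ in which the vertices of $B$ are replaced by ends. The inequality ``max $\le$ min'' is the easy direction and we check it first: every $A$--$\Down{B}$~track must meet every $A$--$B$~separator $S$. For tracks ending in $B\cap V(D)$ this is immediate. For a ray $R$ in an end $\omega\in\Down{B}$ avoiding $S$, we use that $(S,B)$ is dispersed and that $\omega\le b$ for some $b\in B$ (or that $R$ ends in a $B$--out-dominating vertex). From this we build an $A$--$B$~track in $D-S$: follow $R$ and then infinitely many $R$--$b$ connections, which cannot all be hit by the finite set $S_b$. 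Hence disjoint tracks give distinct separator vertices. For the remaining direction we may assume the minimum separator size $k$ is finite, since otherwise we apply the finite case to each $k$.

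For the reverse inequality we first get rid of vertices in $B$. We attach to each $b\in B\cap V(D)$ a new out-ray $R_b$ starting at $b$, make each $R_b$ its own end $\omega_b$, and set $B':=(B\cap\Omega(D))\cup\{\omega_b\}$. An $A$--$\Down{B'}$~ray in the new digraph corresponds to an $A$--$\Down{B}$~track in $D$: either it uses some $R_b$, or its end lies in $\Down{B}$. A separator for $(A,B')$ corresponds to one for $(A,B)$, after replacing vertices of $R_b$ by $b$. We must take care that the new ends do not create spurious relations $\omega\le\omega_b$. This holds because $R_b$ has no incoming edges except through $b$, so only finitely many disjoint paths can reach it.

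Next we make the digraph outer-locally finite. The plan is to replace every vertex $v$ of infinite out-degree by an out-arborescence, or more simply by a ``fan'' out-ray $v=v_0\to v_1\to v_2\to\cdots$, with the $i$-th out-edge of $v$ leaving from $v_i$ (after well-ordering; if the out-degree is uncountable, we first pass to a countable substructure relevant to the finitely many tracks). The out-ray at $v$ then determines an end of the auxiliary digraph. This end must be handled: it lies in $\Down{B'}$ exactly when $v$ is $B$--out-dominating, which matches tracks that end at out-dominating vertices, as those belong to $\Down{B}$. To avoid the new vertices $v_i$ being used in separators, we give each such vertex weight by blowing it up into $k+1$ copies, or we argue that any separator vertex $v_i$ can be replaced by $v$. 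We then apply \cref{thm:alternating} to obtain $k$ disjoint rays and contract the fans back to obtain $k$ disjoint $A$--$\Down{B}$~tracks in $D$.

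The main obstacle is this last reduction: showing that ends and the relation $\le$ are preserved (no new ends in $\Down{B'}$ except the intended ones), and that a finite separator in the auxiliary graph projects to an $A$--$B$~separator in $D$ of no larger size with $(A,S),(S,B)$ dispersed. If the fan construction behaves badly, we would instead argue directly in $D$ by redoing \cref{lem:alternating_ray_or_separator}. There we would replace the König-type compactness step, which is where outer-local finiteness was used, by the observation that an infinite branching at a vertex $v$ of the tree $\bigcup T_n$ yields infinitely many paths from $v$ to $\mathcal{R}$ disjoint outside $v$. This makes $v$ out-dominating, so it is itself the endpoint of an $A$--$\Down{B}$~track, and augmenting along it still gives $|\mathcal{R}|+1$ tracks.
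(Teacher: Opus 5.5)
\textbf{Main route: gap at the countable reduction.} Your main route is the second half of the paper's proof. You replace each vertex of infinite out-degree by a fan out-ray carrying its out-edges, turn vertex targets into ends via attached out-rays, apply \cref{thm:alternating}, and project the separator back by replacing fan vertices with their root.

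The gap is your parenthetical about uncountable out-degree. Any outer-locally finite gadget hanging off $v$ has only countably many vertices, so you really need a countable subdigraph $D'$ first. An unspecified ``countable substructure relevant to the tracks'' does not suffice: a $k$-set separating in $D'$ need not separate in $D$, and ends, $\le$ and $\Down{\cdot}$ all change when passing to $D'$.

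The paper's device is as follows. Fix a maximum family $\mathcal{R}$ with $k=|\mathcal{R}|\le|A|$; incidentally, $A$ is itself an $A$--$B$~separator, so your remark about infinite $k$ is unnecessary. For each of the countably many $k$-sets $X\subseteq V(\bigcup\mathcal{R})$ that do not separate in $D$, choose a witness $A$--$B$~track $S_X\subseteq D-X$. Let $D'$ be the union of $\mathcal{R}$ and all $S_X$, and take as new target $C$ the endvertices and the ends, computed in $D'$, of these tracks. A size-$k$ separator of $D'$ must contain exactly one vertex of each track of $\mathcal{R}$, so it is a $k$-subset of $V(\bigcup\mathcal{R})$. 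It is not one of the $X$, because $S_X\subseteq D'$, and hence it separates in $D$. This is the step with real content. The issues you flag as the main obstacle (preservation of ends and $\le$, projecting the separator) are the routine part.

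\textbf{Fallback: a viable alternative route.} Your fallback is genuinely different and, as far as I can see, works; it avoids both the countable reduction and all gadgets. In the tree of \cref{lem:alternating_ray_or_separator}, a node $z$ gets at most one child via an $\mathcal{R}$-edge, since $g$ is injective. Every other child starts a segment $xSy$ that meets the earlier image only in $x=f(z)$. So infinitely many children give infinitely many $f(z)$--$\bigcup\mathcal{R}$~paths that meet only in $f(z)$ and end in distinct vertices, infinitely many of them on a single ray of $\mathcal{R}$. Thus $f(z)\in\Down{B}$.

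This even forces $f(z)\in A\cap\Down{B}$, where one augments directly. Indeed, every image vertex outside $A$ is an internal vertex of an $A$--$\Down{B}$~track, except the endvertex of a path-track $R\in\mathcal{R}$. Such an endvertex, once in the image, equals $u_R$, and every witness $S$ avoids $u_R$.

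To make this a proof, you still have to restate \cref{lem:alternating_ray} and \cref{lem:alternating_ray_or_separator} for families containing finite paths ending in $\Down{B}$, so that alternating paths may stop at a vertex of $\Down{B}\setminus V(\mathcal{R})$. You must also check that the augmented components are tracks, with no internal vertex in $\Down{B}$. Neither needs a new idea. Compared with the paper, this direct route is shorter and handles uncountable out-degree for free. The paper's route instead reuses \cref{thm:alternating} verbatim, at the cost of the reduction to $D'$ and the gadget bookkeeping.
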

	
	\begin{proposition}\label{prop:separator_closure}
		Let $D$ be a digraph and let $A$ and $B$ be sets of vertices and ends of $D$.
		Then every $A$--$B$~separator is also an $\Up{A}$--$\Down{B}$~separator.
	\end{proposition}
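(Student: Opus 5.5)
Let $S$ be an $A$--$B$~separator. There are two things to prove: that $D-S$ contains no $\Up{A}$--$\Down{B}$~track, and that the tuples $(\Up{A},S)$ and $(S,\Down{B})$ are dispersed. Both should reduce to the corresponding properties of $S$ with respect to $A$ and $B$. The tool is a rerouting argument: any track starting at an element of $\Up{A}\setminus A$ can be extended backwards, avoiding any prescribed finite set, to a track starting in $A$. Symmetrically, tracks ending in $\Down{B}\setminus B$ can be extended forwards into $B$.

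\textbf{Step 1 (extension lemma).} First I would prove the following. Let $x\in\Up{A}$ and let $X\subseteq V(D)$ be finite. Then every ray or vertex witnessing $x$ can be reached from some element of $A$ by a path, in-ray or double ray that avoids $X$ except possibly at its last point. There are three cases.
\begin{itemize}
\item If $x$ is an end with $a\le x$ for some $a\in A$, take $R_1\in a$ and $R_2\in x$ with infinitely many disjoint $R_1$--$R_2$~paths. Since $a$ contains an in-ray, replace $R_1$ by an equivalent in-ray (equivalence gives infinitely many disjoint paths in both directions, so the relation $\le$ transfers). Then choose one $R_1$--$R_2$~path $P$ avoiding $X$ whose endpoint lies beyond $X$ on $R_2$, and take a tail of $R_1$ avoiding $X$. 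Concatenating gives an in-ray of $a$, followed by $P$, followed by a tail of $R_2$.
\item If $x$ is $A$--in-dominating via a ray $R$ in an end $a\in A$, then among the infinitely many $R$--$x$~paths meeting only in $x$, all but finitely many avoid $X$. Again pass to an in-ray of $a$.
\item If $x\in A$, there is nothing to do.
\end{itemize}
In each case the concatenated walk contains a track-like object, which is obtained by taking the last visit to $A$ and shortcutting.

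\textbf{Step 2 (no tracks).} Suppose $T$ is an $\Up{A}$--$\Down{B}$~track in $D-S$. I would like to apply Step~1 with $X=S$, but $S$ may be infinite, so instead I use dispersedness. Because $(A,S)$ and $(S,B)$ are dispersed, each end $a\in A$ has a finite $S_a$ such that there is no $a$--$S$~track in $D-S_a$. The aim is to show that $S$ cannot accumulate on the connecting objects from Step~1: choose the connecting path, and the tails of the rays, beyond $S_a$, so that they meet $S$ only finitely often. Then pick tails that avoid $S$ entirely, since any ray of $a$ meeting $S$ infinitely often would produce an $a$--$S$~track avoiding $S_a$. The same is done at the $B$ side. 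Gluing yields an $A$--$B$~walk in $D-S$, and extracting the subtrack between its last visit to $A$ and its first visit to $B$ gives an $A$--$B$~track in $D-S$, which is a contradiction.

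\textbf{Step 3 (dispersedness).} For an end $x\in\Up{A}$ with $a\le x$, I claim $S_a\cup F$ works as $S_x$, for a suitable finite set $F$. If an $x$--$S$~track survived in $D-(S_a\cup F)$, Step~1 would extend it back to an $a$--$S$~track avoiding $S_a$. The extension can be chosen to avoid $S_a$ because $S_a$ is finite. The $B$ side is symmetric.

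\textbf{Main obstacle.} The delicate point is the shortcutting in Steps~2 and~3. The extended object must again be a genuine track: its internal vertices must avoid $A\cup B$ (respectively $S$), and rays must remain in the correct end after cutting. I expect to handle this by always cutting at the last vertex in the source set and the first vertex in the target set, and by noting that tails of rays stay in the same end.
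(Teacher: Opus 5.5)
Your route is the paper's. You pull a track starting in $\Up{A}\setminus A$ back to an in-ray of some $a\in A$ inside $D-S_a$, argue that this extension misses $S$, glue, and argue symmetrically at the $B$-side. Your Step~3 also covers the dispersedness of $(\Up{A},S)$ and $(S,\Down{B})$, which the paper's proof never addresses.

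The real obstacle, however, is not the shortcutting you single out. It is the claim in Step~2 that the connecting objects meet $S$ only finitely often, resting on ``any ray of $a$ meeting $S$ infinitely often would produce an $a$--$S$~track avoiding $S_a$''. That claim is false. An $a$--$S$~track is an in-ray of $a$ ending in $S$ with no internal vertex in $S$. So if $S$ meets every in-ray of $a$ infinitely often, there are no $a$--$S$~tracks at all, and dispersedness of $(A,S)$ holds vacuously.

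Concretely, let $D$ consist of an in-ray $\dots\to v_3\to v_2\to v_1$ with end $a$, a vertex $x$ with an edge $v_i\to x$ for every $i$, and an edge $x\to b$. Let $A=\{a\}$, $B=\{b\}$ and $S=\{v_i: i\in\N\}$. Then:
\begin{itemize}
\item $D-S$ contains no in-ray, hence no $A$--$B$~track.
\item The only in-rays ending in $S$ are tails of $\dots\to v_2\to v_1$, whose internal vertices all lie in $S$. So $(A,S)$ is dispersed with $S_a=\emptyset$.
\item $(S,B)$ is trivially dispersed.
\end{itemize}
Thus $S$ is an $A$--$B$~separator, and $(A,B)$ is even dispersed, via $\{x\}$. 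Yet $x$ is $A$--in-dominating, and $xb$ is an $\Up{A}$--$\Down{B}$~track in $D-S$. So the statement fails under the literal definition of dispersed. The paper's proof has exactly the same hole at ``$R_a$ avoids $S$, since $R_a$ avoids $S_a$''.

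What your argument, and the paper's, actually needs is the non-accumulation property the paper's informal remark describes. For every end $a\in A$ there should be a finite $S_a$ such that no in-ray of $a$ in $D-S_a$ meets $S$ at all. Dually, for every end $b\in B$ there should be a finite $S_b$ such that no out-ray of $b$ in $D-S_b$ meets $S$. This is automatic when $S$ is finite: take $S_a=S$.

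With this property your proof goes through. The connecting object from Step~1 built inside $D-S_a$ is itself an in-ray of $a$ in $D-S_a$ (ending on $T$), so it misses $S$. In Step~3 you may simply take $S_x:=S_a$, with no extra set $F$.

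Finally, like the paper, you use that every end of $A$ contains an in-ray and every end of $B$ an out-ray. This is not in the statement, but it cannot be dropped. Let $a$ be the end of an out-ray $r_1\to r_2\to\cdots$, and add edges $r_i\to v$ for all $i$ and an edge $v\to b$. Then there is no $\{a\}$--$\{b\}$~track at all, so $\emptyset$ is an $\{a\}$--$\{b\}$~separator. But $v$ is $\{a\}$--in-dominating, so $vb$ is an $\Up{\{a\}}$--$\Down{\{b\}}$~track.
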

	\begin{proof}
		Suppose for a contradiction that there is an $A$--$B$~separator $S$ and an $\Up{A}$--$\Down{B}$~track $T$ such that $S$ and $T$ are disjoint.
		We construct an $A$--$B$~track $T''$ such that $S$ and $T''$ are disjoint, which contradicts that $S$ is an $A$--$B$~separator.
		
		If $T$ starts in $A$, we set $T':= T$.
		Otherwise, let $a \in A$ such that $T$ starts in $\Up{a}$.
		Since $S$ is an $A$--$B$~separator, there is a finite $a$--$S$~separator $S_a$.
		There is an in-ray $R_a \subseteq D - S_a$ in $a$ that contains a vertex of $T$.
		Note that $R_a$ avoids $S$, since $R_a$ avoids $S_a$.
		Then the union $R_a \cup T$ contains an $a$--$\Down{B}$~track $T'$ that avoids $S$.
		
		If $T'$ ends in $B$, the $T'':= T'$ is as desired.
		Otherwise, let $b \in B$ such that $T'$ ends in $\Down{b}$.
		Since $S$ is an $A$--$B$~separator, there is a finite $S$--$b$~separator $S_b$.
		There is a out-ray $R_b \subseteq D - S_b$ in $b$ that contains a vertex of $T'$.
		Note that $R_b$ avoids $S$, since $R_b$ avoids $S_b$.
		Then the union $R_b \cup T'$ contains an $a$--$b$~track $T''$ that avoids $S$.
	\end{proof}
	
	\begin{proof}[Proof of~\cref{lem:general}]
		Let $\mathcal{R}$ be a maximal family of $A$--$\Down{B}$~tracks in $D$.
		By~\cref{prop:separator_closure}, every $A$--$B$~separator has size at least $|\mathcal{R}|$.
		We have to show that there exists an $A$--$B$~separator of size $|\mathcal{R}|$.
		
		First, we reduce this problem to a countable subgraph $D'$ of $D$. 
		Let $\mathcal{X}$ be the set of all $|\mathcal{R}|$--element subsets $X$ of $\bigcup_{R \in \mathcal{R}} V(R)$ for which there exists an $A$--$B$~track $S_X$ in $D-X$.
		We set $D':= \bigcup_{R \in \mathcal{R}} R \cup \bigcup_{X \in \mathcal{X}} S_X$ and let $C$ be the set of endvertices of the paths in $\mathcal{R} \cup \{ S_X: X \in \mathcal{X}\}$ and of ends of the out-rays in $\mathcal{R} \cup  \{ S_X: X \in \mathcal{X}\}$ with respect to $D'$.
		Note that $D'$ is countable.
		Since $D'$ is a subgraph of $D$ and by the choice of $C$, $\mathcal{R}$ is also a maximal family of disjoint $A$--$\Down{C}$~tracks in $D'$.
		We show that there is an $A$--$C$~separator $X$ in $D'$ of size $|\mathcal{R}|$.
		Then $X \subseteq \bigcup_{R \in \mathcal{R}} V(R)$ and thus $X \notin \mathcal{X}$ since otherwise there would exist $S_X$ in $D'$, which is an $A$--$C$~track that avoids $X$.
		This implies that $X$ is an $A$--$B$~separator of size $|\mathcal{R}|$ in $D$, as desired.
		
		Second, we turn $D'$ into an outer-locally finite digraph $D''$ in which all tracks of $\mathcal{R} \cup \{S_X : X \in \mathcal{X}\}$ become out-rays.
		Let $Y$ be the set of endvertices of the paths in $\mathcal{R} \cup \{S_X : X \in \mathcal{X}\}$ and of vertices of infinite out-degree in $D'$.
		Let $D''$ be obtained from the union of $D'$ and a disjoint family of out-rays $(T_y)_{y \in Y}$, where the root of $T_y$ is $y$ and $T_y$ is otherwise disjoint to $D'$, by moving the tails of all edges starting in $y$ to distinct vertices of $T_y$ for every $y \in Y$.
		Let $\hat{C}$ be the set of all ends in $C$ and all ends containing some $T_y$ for $y \in C$.
		Note that $D''$ is outer-locally finite. 
		
		Each $A$--$\Down{C}$~track $Q$ in $D'$ corresponds to some $A$--$\Down{\hat{C}}$~out-ray $\hat{Q}$ in $D''$:
		If $Q$ is an out-ray, let $\hat{Q}$ be obtained from $Q$ by adding some finite path in $T_y$ for every $y \in Y \cap V(Q)$.
		Note that $\hat{Q}$ is contained in some end of $\Down{\hat{C}}$.
		If $Q$ is a path, let $\hat{Q}$ be the path obtained from $Q$ by adding some finite path in $T_y$ for every $y \in Y \cap V(Q)$, as before, and adding $T_{y'}$ at the end, where $y'$ is the endvertex of $Q$.
		Note that $y'$ is either in $C$ or is $C$--out-dominating, which implies that $T_{y'}$, and thus $\hat{Q}$, is in some end of $\Down{\hat{C}}$.
		
		Conversely, each $A$--$\Down{\hat{C}}$~out-ray $\hat{O}$ in $D''$ corresponds to an $A$--$\Down{C}$~track in $D'$:
		Let $O$ be the path obtained from $\hat{O}$ by contracting all finite paths in $\hat{O}$ that are contained in some $T_y$ for $y \in Y$ and possibly contracting some tail of $\hat{O}$ that is contained in some $T_y$ for $y \in Y$.
		If $O$ contains a vertex in $\Down{C}$, then the restriction of $O$ up to the first such $y \in \Down{C}$ is as desired.
		Thus we can assume that all vertices of $O$ are not in $\Down{C}$.
		If $O$ is an out-ray, i.e. no tail of $\hat{O}$ is contained in some $T_y$, then it is in some end of $\Down{C}$ since $\hat{O}$ is in some end of $\Down{\hat{C}}$, as desired.
		Otherwise, i.e. if $\hat{O}$ has a tail in some $T_{y'}$ for $y' \in Y$, the end containing $T_{y'}$ is in $\Down{\hat{C}} \setminus \hat{C}$ since $y' \notin \Down{C}$.
		Thus there are infinitely many $T_{y'}$--$\Tilde{R}$~paths for some out-ray $\Tilde{R}$ in an end of $\hat{C}$.
		Note that the end of $\Tilde{R}$ does not coincide with the end of some $T_{y}$ for $y \in Y$ by construction of $D''$.
		This implies that the end of $\Tilde{R}$ corresponds to an end in $C$.
		Thus $y'$ is $C$--out-dominating in $D'$, which implies that $Oy'$ is as desired.
		
		Thus $\mathcal{R}$ corresponds to a maximal family of disjoint $A$--$\Down{\hat{C}}$~out-rays.
		We apply~\cref{thm:alternating} to $A$ and $\hat{C}$ in $D''$, which provides an $A$--$\hat{C}$~separator $\hat{X}$ in $D''$ of size $|\mathcal{R}|$.
		Then $X:= \{y \in Y: T_y \cap \hat{X} \neq \emptyset\} \cup (\hat{X} \setminus \bigcup_{y \in Y} V(T_y))$ is an $A$--$C$~separator in $D'$ of size $|\mathcal{R}|$, as desired.
		This completes the proof.
	\end{proof}
	
	\section{Separators and envelopes} \label{sec:existence}
	
	In this section we show the existence of the following separator:
	\begin{lemma}\label{lem:existence_separator}
		Let $D$ be a digraph and let $A$ and $B$ be sets of vertices and ends of $D$ such that $(A,B)$ is dispersed.
		Then there exists an $A$--$B$~separator $S$ such that
		\begin{itemize}
			\item for every $a \in A$ there is a finite $a$--$S$~separator contained in $S$, and
			\item for every $b \in B$ there is a finite $S$--$b$~separator contained in $S$.
		\end{itemize}	
	\end{lemma}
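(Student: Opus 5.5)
The plan is to build $S$ out of the finite separators that dispersedness provides, and then adjust it so that each of these finite sets separates its end not only from $B$ (or $A$) but from $S$ itself. For every end $a \in A$ fix a finite $S_a$ with no $a$--$B$~track in $D - S_a$, and for every end $b \in B$ fix a finite $S_b$ with no $A$--$b$~track in $D - S_b$. For an end $a \in A$ let $U_a$ be the set of vertices $v \notin S_a$ for which there is an $a$--$v$~track in $D - S_a$, that is, an in-ray of $a$ ending in $v$ and avoiding $S_a$. Dually, for an end $b \in B$ let $W_b$ be the set of vertices $v \notin S_b$ for which there is a $v$--$b$~track in $D - S_b$. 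By the choice of $S_a$, the set $U_a$ contains no vertex of $B$, and no track from $U_a$ into an end of $B$ avoids $S_a$; the dual statements hold for $W_b$.

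The first attempt is $S_0 := (A \cap V(D)) \cup (B \cap V(D)) \cup \bigcup_a S_a \cup \bigcup_b S_b$. This is an $A$--$B$~separator. Every track starting in a vertex of $A$ meets $S_0$ at its start. A track starting in an end $a$ and avoiding $S_0$ would be an $a$--$B$~track in $D - S_a$, which is impossible. For the dispersedness of $(A,S_0)$ and $(S_0,B)$, and for the two bullet points, the vertex case is trivial: $\{a\} \subseteq S_0$ is a finite $a$--$S_0$~separator, and similarly for vertices of $B$. The remaining issue, for an end $a$, is whether $S_a$ separates $a$ from $S_0$. This holds exactly when $S_0 \cap U_a = \emptyset$, and similarly the dual condition needs $S_0 \cap W_b = \emptyset$.

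So the real construction discards the vertices that lie ``too deep'' in some $U_a$ or $W_b$. Set $U := \bigcup_a U_a$ and $W := \bigcup_b W_b$, and define $S := S_0 \setminus (U \cup W)$. A vertex of $S_0$ that lies in $U_a$ can reach no end of $B$ and no vertex of $B$ inside $D - S_a$, so it should be dispensable. Removing it makes $S \cap U_a = \emptyset$, and hence $S_a \cap S$ is a finite $a$--$S$~separator contained in $S$. Here I still have to check that $S_a \subseteq S$, or replace $S_a$ by its part not lying in $U \cup W$. The dual argument handles the ends $b$.

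The main obstacle is the separation property after the deletion. Take an $A$--$B$~track $T$ avoiding $S$; it must meet $S_0$, say first at a vertex $x \in U_{a'} \cup W_{b'}$. If $x \in U_{a'}$, I would splice an in-ray of $a'$ witnessing $x \in U_{a'}$ with the rest $xT$ of the track. This gives an $a'$--$B$~track, and the intended contradiction is that it avoids $S_{a'}$. For that I need $xT$ to avoid $S_{a'}$, which is not automatic. So I would first try to choose $x$ as the \emph{last} vertex of $T$ in $U$ (resp.\ the first in $W$), and argue by induction along $T$ that the vertices of $S_{a'}$ on $xT$ are themselves in $S$ or in some other $U_{a''}$, pushing $x$ further along. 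If this becomes circular, the fallback is to replace the $S_a$ by minimal separators, chosen via \cref{lem:general} applied to suitable finite sets, so that every vertex of $S_a$ has an out-neighbour in $U_a$; this should make the splicing argument go through.
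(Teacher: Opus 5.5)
Your diagnosis is right ($S_0$ separates, but may fail dispersedness because some $S_b$ lie deep inside $U_a$ and vice versa). The cure, deleting $U\cup W$, genuinely fails, and the failure is exactly the circularity you feared.

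Let $D$ be a directed double ray $\dots x_{-1}x_0x_1\dots$. Let $a$ be the end of the in-ray $\dots x_{-1}x_0$ and $b$ the end of the out-ray $x_0x_1\dots$, and put $A=\{a\}$, $B=\{b\}$. Since $D-x_i$ contains no $a$--$b$~track, $S_a=\{x_1\}$ and $S_b=\{x_0\}$ are legitimate choices, and both are minimal. Then $U_a=\{x_k:k\le 0\}\ni x_0$ and $W_b=\{x_k:k\ge 1\}\ni x_1$. So $S=\{x_0,x_1\}\setminus(U\cup W)=\emptyset$, while $D$ itself is an $A$--$B$~track.

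In your splicing step, the last vertex of the track in $U$ is $x_0\in U_a$. The next vertex of $S_a$ is $x_1$, which was deleted because it lies in $W_b$. Minimality or tightness of the separators does not exclude this: $x_1$ has the in-neighbour $x_0\in U_a$, and $x_0$ has the out-neighbour $x_1\in W_b$. Note that $S_0=\{x_0,x_1\}$ itself would have been fine here; the pruning is what destroys separation.

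The bullet points are also left open. A vertex of $A$ lying in some $U_{a'}$ gets deleted, so $\{a\}\not\subseteq S$. And $S_a\cap S$ need not separate $a$ from $S$ once part of $S_a$ has been removed into $W$.

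The missing idea is that the finite separators for the various ends cannot be fixed independently and then reconciled by deletion. They must be read off from one coherently built set, which the paper constructs as follows:
\begin{enumerate}
\item Start from $X=(\Down{B}\cap V(D))\cup\bigcup\mathcal{R}$, with $\mathcal{R}$ a maximal family of disjoint out-rays in ends of $\Down{B}$ avoiding $\Down{B}\cap V(D)$. This separates $A$ from $B$, and dispersedness yields a finite $a$--$X$~separator for every end $a\in A$.
\item Pass to an in-envelope $X'$ of $X$ (\cref{lem:in-envelope}), obtained by adding a maximal family of disjoint $X$--in-attached out-rays and all $X$--in-attached vertices.
\item Keep only the first vertices $V_a\subseteq X'$ reached from each $a$. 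Show that $V=\bigcup_a V_a$ meets only finitely many vertices having a track to $b$ that avoids a fixed finite $A$--$b$~separator.
\item Add the vertices of $A$ that do not lie beyond some $V_a$ (i.e.\ lie outside $Z$).
\item Take an out-envelope in $D-Z$ (\cref{lem:out-envelope}).
\end{enumerate}
Each step either enlarges a separating set or replaces it by its set of first-hit vertices, so separation is never lost. The envelope lemmas are what provide finite separators contained in the set, simultaneously for all relevant ends.
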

	\noindent
	The proof of~\cref{lem:existence_separator} is inspired by the proof of its undirected counterpart~\cite{polat1991mengerian}*{Proposition 0.13 and Theorem 1.2} and builds on the concept of \emph{envelopes}. 
	Envelops have been introduced by Kurkofka and Pitz~\cite{kurkofka2021representation} in the context of undirected graphs, and play an important role in the study of tree-decompositions and the end structure of infinite undirected graphs~\cites{aurichi2024topological,koloschin2023end,pitz2022constructing,albrechtsen2025displaying}.
	We transfer this concept to digraphs.
	
	An \emph{in-envelope} for a set $U \subseteq V(D)$ is a superset $Y \supseteq U$ such that there is a finite $v$--$Y$~separator contained in $Y$ for every $v \in V(D)$ and a finite $\omega$--$Y$~separator contained in $Y$ for every $\omega \in \Omega(D)$ for which there is a finite $\omega$--$U$~separator.
	Similarly, an \emph{out-envelope} for a set $U \subseteq V(D)$ is a superset $Y \supseteq U$ such that there is a finite $Y$--$v$~separator contained in $Y$ for every $v \in V(D)$ and a finite $Y$--$\omega$~separator contained in $Y$ for every $\omega \in \Omega(D)$ for which there is a finite $U$--$\omega$~separator.
	
	Given a set $U \subseteq V(D)$, we say a vertex $v \in V(D) \setminus U$ is \emph{$U$--in-attached}, if there is an infinite family of $v$--$U$~paths that intersect only in $v$.
	Furthermore, we say an out-ray $R$ is \emph{$U$--in-attached}, if there is an infinite family of disjoint $R$--$U$~paths.
	
	\begin{proposition}\label{prop:vertex_or_ray}
		Let $D$ be a digraph, let $U \subseteq V(D)$ and let $v \in V(D) \setminus U$.
		If there does not exist a finite $v$--$U$~separator in $U$, then there is either a $U$--in-attached vertex in $V(D) \setminus U$ or a $U$--in-attached out-ray that avoids $U$.
	\end{proposition}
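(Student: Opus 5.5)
We argue by contrapositive: assume no vertex of $V(D)\setminus U$ is $U$--in-attached, and we show that either there is a $U$--in-attached out-ray avoiding $U$ or there is a finite $v$--$U$ separator contained in $U$.

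The natural candidate for the separator is the set $S\subseteq U$ of vertices in which some $v$--$U$ path ends; a $v$--$U$ path meets $U$ only in its endvertex. This $S$ is clearly a $v$--$U$ separator inside $U$ (tracks from a vertex are just paths here), so by hypothesis $S$ is infinite. We build the out-ray greedily. Let $H$ be the subgraph formed by the union of all $v$--$U$ paths, with their final vertices removed, so that $H$ avoids $U$. Every vertex of $H$ is reachable from $v$ within $H$, and from each $x\in V(H)$ there is an $x$--$U$ path whose interior lies in $H$.

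First we show that $H$ has a vertex $x$ from which infinitely many vertices of $S$ can be reached by paths through $H$; $x=v$ works. Given such an $x$, we apply the non-attachment of $x$. Then the set of all $x$--$U$ paths cannot contain an infinite subfamily meeting only in $x$. By a standard fan/Menger argument on paths starting at one vertex (the finite Menger theorem applied in the digraph obtained by splitting $x$), the maximum fan from $x$ to $S_x$ is finite. Here $S_x$ is the set of vertices of $S$ reachable from $x$ through $H$. Hence there is a finite set $F_x\subseteq V(D)\setminus\{x\}$ meeting every $x$--$U$ path except in $x$. This $F_x$ can be taken inside $H\cup U$, and we put $F_x$ into $H\cup S$.

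Since $S_x$ is infinite and $F_x$ is finite, some vertex $x'\in F_x\setminus U$ still reaches infinitely many vertices of $S$ via paths in $H$ that start at $x'$. We also want these paths to avoid what has been built so far. We iterate: choose a finite path $x\to x'$ in $H$ and continue from $x'$. To keep the construction a ray, at each stage we work in $H$ minus the finite initial segment $P_n$ constructed so far. Removing finitely many vertices only removes finitely many vertices of $S$ that are "essential" in the following sense: vertices reachable only through $P_n$ get separated by the finite set. Here too non-attachment together with finiteness is used: from $x_n$, the set of $S$-vertices reachable in $H-P_n$ stays infinite, because any $S$-vertex lost must be reached through $P_n$, and we argue as in the fan step that these can be forced into a finite separator.

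The union $R=\bigcup P_n$ is an out-ray avoiding $U$. At each step we also record one $x_n$--$S$ path in $H-P_n$ ending at a new vertex of $S$. By choosing the next segment of the ray to leave this path, and by discarding the finitely many vertices used, these give infinitely many disjoint $R$--$U$ paths. So $R$ is $U$--in-attached.

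The main obstacle is the bookkeeping in the iterative step. We must guarantee simultaneously that (i) the continuation from $x_n$ can avoid the previously used finite set while still reaching infinitely many vertices of $S$, and (ii) the witnessing $R$--$U$ paths can be chosen disjoint. First I would try to maintain the invariant "$x_n$ reaches infinitely many vertices of $S$ in $D-(P_n-x_n)-W_n$", where $W_n$ is the finite set of vertices used by the earlier witness paths. The invariant is preserved using finite fans at $x_n$ (non-attachment) plus pigeonhole over the finite separator $F_{x_n}$.
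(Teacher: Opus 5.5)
\paragraph{The gap.} Your opening moves are fine. $S$ is infinite, and a non-attached vertex $x$ has a finite set $F_x$ meeting all $x$--$U$ paths outside $x$. (This is simplest by a greedy argument: if no finite set works, pick fan paths one after another, each avoiding the earlier ones. Appealing to a ``maximum fan'' is not needed.) Pigeonhole over $F_x$ then gives some $x'$ that still reaches infinitely many vertices of $S$.

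The genuine gap is the iterative step, which you flag yourself. The principle you invoke for it is false: that deleting a finite set (the segment $P_n$ and the used witness paths) costs $x_n$ only finitely many reachable vertices of $S$. A single non-attached vertex can be the only gateway to infinitely many vertices of $S$. Concretely:
\begin{itemize}
\item Take out-rays $x_0x_1x_2\ldots$ and $y_0y_1y_2\ldots$ and a vertex $z$, with edges $x_i\to z$ for all $i$, $z\to y_0$, and $y_j\to s_j$ where $s_j\in U$. Put $v=x_0$.
\item Every vertex has out-degree at most $2$, so none is $U$--in-attached.
\item With $F_{x_n}=\{x_{n+1},z\}$, your rule allows $x'=x_{n+1}$ at every step, since $x_{n+1}$ reaches infinitely many $s_j$ while avoiding $x_0,\ldots,x_n$. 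The resulting ray $x_0x_1x_2\ldots$ is not $U$--in-attached, because all its paths to $U$ pass through $z$.
\item Your witness bookkeeping does not rescue this. Any witness path from some $x_n$ contains $z$, so once it is discarded, $x_{n+1}$ reaches no vertex of $S$ and your invariant fails.
\end{itemize}
So ``finite fans plus pigeonhole over $F_{x_n}$'' does not preserve the invariant. What is missing is a rule that chooses the branch point of the witness path and the continuation of the ray together.

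\paragraph{How to close it.} The paper avoids all of this with a single out-arborescence $T$ rooted at $v$. It meets $U$ exactly in its infinitely many leaves, and every inner vertex lies on a $v$--$U$ path of $T$. A vertex of infinite out-degree in $T$ is $U$--in-attached. Otherwise K\"onig's lemma gives an out-ray in $T$, which avoids $U$. Every vertex of this ray has a leaf below it, so the ray branches off at infinitely many vertices into pairwise disjoint subtrees, giving infinitely many disjoint paths to $U$.

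If you prefer to keep your greedy construction, one workable rule is this. Fix a witness path $w_0\ldots w_k$ from $x_n$ to $S$ in the current digraph $D_n$. Let $i$ be the largest index such that $w_i$ reaches infinitely many vertices of $S$ in $D_n-\{w_0,\ldots,w_{i-1}\}$. Non-attachment of $w_i$, via a greedy fan argument, gives an out-neighbour $y$ of $w_i$ that reaches infinitely many vertices of $S$ in $D_n-\{w_0,\ldots,w_i\}$. By maximality of $i$, paths through $w_{i+1},\ldots,w_k$ reach only finitely many vertices of $S$, so $y$ lies off the witness path and still reaches infinitely many vertices of $S$ in $D_n-\{w_0,\ldots,w_k\}$. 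Then extend the ray by $w_0\ldots w_iy$, keep $w_i\ldots w_k$ as the witness, and pass to $D_{n+1}=D_n-\{w_0,\ldots,w_k\}$.
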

	\begin{proof}
		We assume that there does not exist a finite $v$--$U$~separator in $U$.
		Then there exists an out-arborescence $T$ rooted in $v$ that intersects $U$ precisely in its leaves, that has infinitely many leaves and such that each inner vertex is contained in some $v$--$U$~path in $T$.
		If $T$ contains a vertex of infinite out-degree, then it is a $U$--in-attached vertex.
		Otherwise, there exists an out-ray in $T$, which is $U$--in-attached and avoids $U$.
	\end{proof}
	
	\begin{lemma}\label{lem:in-envelope}
		Let $D$ be a digraph and let $U \subseteq V(D)$.
		Then there exists an in-envelope for $U$.
	\end{lemma}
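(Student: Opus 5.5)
We build the in-envelope by a transfinite (or $\omega$-step) closure process, in the spirit of Kurkofka and Pitz. Start with $Y_0 := U$. Given $Y_\alpha$, we use \cref{prop:vertex_or_ray} to find the obstructions: vertices $v$ without a finite $v$--$Y_\alpha$~separator in $Y_\alpha$, and ends $\omega$ which have a finite $\omega$--$U$~separator but no finite $\omega$--$Y_\alpha$~separator inside $Y_\alpha$. Each obstruction produces either a $Y_\alpha$--in-attached vertex or a $Y_\alpha$--in-attached out-ray avoiding $Y_\alpha$.

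We then enlarge $Y_\alpha$ to $Y_{\alpha+1}$ as follows. Add all $Y_\alpha$--in-attached vertices. For each end $\omega$ with a finite $\omega$--$U$~separator $S_\omega$, add a finite $\omega$--$Y_\alpha$~separator; such a set exists because $S_\omega \subseteq$ a finite set of vertices reachable appropriately. Concretely, consider the $\omega$--$U$ tracks in $D - S_\omega$, and from the finitely many vertices of $S_\omega$ add finite separators. At limits take unions. The process stabilises, or we stop at a suitable limit ordinal, say $\omega_1$ or the cofinality needed so that finite separators stabilise. The final $Y$ should satisfy the following: for every vertex $v$, either there is a finite $v$--$Y$~separator in $Y$, or by \cref{prop:vertex_or_ray} there is a $Y$--in-attached vertex outside $Y$ or a $Y$--in-attached out-ray avoiding $Y$.

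The key steps are as follows. First, note that adding in-attached vertices kills the first alternative. Second, show that a $Y$--in-attached out-ray $R$ avoiding $Y$ cannot exist at the fixed point. Its infinitely many disjoint $R$--$Y$ paths end in $Y$. We would like $R$'s end, or a vertex on $R$, to have been separated at some earlier stage. To force this, at each stage we also add, for each in-attached out-ray, one vertex of it together with a finite separator, so that rays get cut. Cleaner is the Kurkofka--Pitz trick: choose $Y$ to be a \emph{closure} under the operation ``add all in-attached vertices, and for each end with a finite separator to $Y_\alpha$ add the separator''. Run this operation $\omega$ times to get countably many additions per original finite piece. This works when $U$ is finite or countable locally, and general $U$ is handled by the union over regular-cardinal stages.

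The main obstacle I expect is the second step: showing that at the limit no in-attached out-ray avoids $Y$. The infinitely many disjoint $R$--$Y$ paths may end in vertices added at cofinally many stages, so no single stage sees $R$ as attached. I would handle this by closing up over $\omega_1$ stages, or more robustly with an elementary-submodel style closure. Take $Y$ closed under: for each finite $F\subseteq Y$ and each vertex or end with no finite separator to $F$ but with one to $U$, include that separator. Then a ray attached to $Y$ would have its end $\omega$ with no finite separator to $Y$, so $\omega$ dominates $Y$. We would then derive a contradiction with the fact that $\omega$ has a finite $\omega$--$U$ separator added at some stage, and with the fact that vertices of $R$ themselves were treated.
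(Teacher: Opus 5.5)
There is a genuine gap, exactly at the step you flag as the main obstacle. Nothing in your closure operation ever disposes of a $Y$--in-attached out-ray avoiding $Y$, and such rays are invisible to both parts of the operation. Take the comb consisting of an out-ray $R=r_1r_2\dots$ with extra edges $r_i\to u_i$, and $U=\{u_1,u_2,\dots\}$. No vertex is $U$--in-attached, since each $r_i$ has out-degree $2$. Moreover $D$ contains no in-ray, so for the unique end $\omega\ni R$ the empty set is an $\omega$--$Y$~separator for every $Y\subseteq V(D)$: an $\omega$--$Y$~track would have to be an in-ray of $\omega$. Hence your operation may add nothing and stabilise at $Y=U$. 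Yet $r_1$ has no finite $r_1$--$U$~separator inside $U$, because $r_1\to\dots\to r_j\to u_j$ avoids every finite subset of $U$ missing $u_j$.

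The same example defeats the contradiction you aim for at the end. An in-attached out-ray does not stop its end from having a finite $\omega$--$Y$~separator, since such separators only concern in-rays of $\omega$. Your other suggestions do not repair this. The elementary-submodel condition is vacuous as written, because a finite $F$ is itself a finite separator from anything to $F$. Adding one vertex of each in-attached ray per stage only passes the problem to its tail, and you give no argument that the iteration terminates. Separately, the existence of a finite $\omega$--$Y_\alpha$~separator is asserted without justification. Since these separators need not lie in $Y_\alpha$, those added for other ends may accumulate on $\omega$'s side.

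The missing idea is to put whole rays into $Y$, after which a single step suffices. Let $V$ be the set of $U$--in-attached vertices, $\mathcal{R}$ a $\subseteq$-maximal family of disjoint $U$--in-attached out-rays, and $Y:=U\cup V\cup\bigcup\mathcal{R}$. Every vertex of $V$ and every ray of $\mathcal{R}$ is itself $U$--in-attached. So for any finite $X$, one of the infinitely many $X$-avoiding paths into $Y$ ends in $U\cup V$ or on an $X$-avoiding tail of a ray of $\mathcal{R}$, and it can then be continued to $U$ avoiding $X$. Hence a $Y$--in-attached vertex outside $Y$, or a $Y$--in-attached out-ray avoiding $Y$, is already $U$--in-attached. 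This contradicts the definition of $V$ or the maximality of $\mathcal{R}$, and \cref{prop:vertex_or_ray} then yields the vertex separators.

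For an end $\omega$ with a finite $\omega$--$U$~separator $X$, the paper checks that $Y$ meets the part reachable from $\omega$ in $D-X$ in only finitely many vertices. The reason is that no vertex of $V$ lies there, and each ray of $\mathcal{R}$ meets it only finitely often and must leave it through $X$. It then adds finite $x$--$Y$~separators for $x\in X$. No transfinite iteration is needed.
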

	Given an end $\omega \in \Omega(D)$ and some finite set $X \subseteq V(D)$, we set $\vC(\omega, X):= \{ v \in V(D): \text{ there is an } {v} \text{--} {\omega} \text{ track in } D - X \}$ and $\Cv(\omega, X):= \{ v \in V(D): \text{ there is an } {\omega} \text{--} {v} \text{ track in } D - X \}$.
	The proof of~\cref{lem:in-envelope} follows the lines of the proof of its undirected counterpart~\cite{kurkofka2021representation}*{Theorem 3.2}.
	\begin{proof}
		Let $\mathcal{R}$ be maximal family of disjoint $U$--in-attached out-rays.
		Furthermore, let $V$ be the set of $U$--in-attached vertices.
		We show that $Y:=U \cup \bigcup \mathcal{R} \cup V$ is an in-envelope for $U$.
		
		\begin{claim}\label{clm:no_attached_vertex}
			There is no $Y$--in-attached vertex in $V(D) \setminus Y$.
		\end{claim}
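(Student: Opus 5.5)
Suppose, for a contradiction, that some $x \in V(D) \setminus Y$ is $Y$--in-attached. Then there is an infinite family $\mathcal{P}$ of $x$--$Y$~paths that meet only in $x$. We will build from $\mathcal{P}$ either infinitely many $x$--$U$~paths meeting only in $x$, which forces $x \in V \subseteq Y$, or a $U$--in-attached out-ray that avoids $\bigcup \mathcal{R}$, which contradicts the maximality of $\mathcal{R}$. The proof splits according to where the endvertices of the paths in $\mathcal{P}$ lie.

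First, if infinitely many paths in $\mathcal{P}$ end in $U$, then $x$ is $U$--in-attached, so $x \in V \subseteq Y$, a contradiction.

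Second, suppose infinitely many paths in $\mathcal{P}$ end in $V$, say at distinct vertices $v_1, v_2, \dots$ (they are distinct because the paths meet only in $x$). Each $v_i$ has infinitely many $v_i$--$U$~paths meeting only in $v_i$. We extend the paths inductively. At step $i$, the previously chosen extended paths form a finite set $F$. Since $v_i$ has infinitely many $v_i$--$U$~paths that are disjoint apart from $v_i$, one of them avoids $F \setminus \{v_i\}$. We concatenate it with the $x$--$v_i$~path from $\mathcal{P}$. To keep the resulting walks internally disjoint, we pass to a subsequence and discard conflicts. The delicate point is that the $\mathcal{P}$-path to $v_{i}$ may hit earlier chosen $U$-extensions. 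Because each new $\mathcal{P}$-path is disjoint from the others except at $x$, and each extension is chosen from infinitely many options, a standard greedy thinning of the index set handles this. Where the concatenation is only a walk, we shorten it to a path. The result is infinitely many $x$--$U$~paths meeting only in $x$, again a contradiction.

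Third, suppose infinitely many paths in $\mathcal{P}$ end on rays of $\mathcal{R}$. Since $\mathcal{R}$ may be infinite, we distinguish two subcases. If infinitely many paths end on a single ray $R \in \mathcal{R}$, then, since $R$ is $U$--in-attached, there are infinitely many disjoint $R$--$U$~paths. We route from $x$ along the $\mathcal{P}$-path to $R$, then forward along $R$ to the start of an $R$--$U$~path, and then along that path. Choosing the attachment points increasingly far out on $R$ gives $x$--$U$~paths, but consecutive ones share segments of $R$. The fix is to interleave: we use the $\mathcal{P}$-paths landing on $R$ at points $p_1, p_2, \dots$ and the $R$--$U$~paths starting at $q_1, q_2, \dots$ in the order $p_1 < q_1 < p_2 < q_2 < \cdots$ along $R$, and connect $p_i$ to $q_i$ by the segment $p_i R q_i$. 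These segments are disjoint. The $R$--$U$~paths must also avoid the $\mathcal{P}$-paths, which we arrange by greedy selection, since each $\mathcal{P}$-path is finite and the disjoint $R$--$U$~paths are infinitely many. If instead infinitely many distinct rays $R_i$ are hit, we take a $\mathcal{P}$-path to $R_i$, go a bit along $R_i$, and then take an $R_i$--$U$~path avoiding the finitely many earlier choices. This is possible because each $R_i$ has infinitely many disjoint $R_i$--$U$~paths. In both subcases $x$ becomes $U$--in-attached, a contradiction.

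The main obstacle is the bookkeeping that keeps the concatenated $x$--$U$~paths internally disjoint. The $\mathcal{P}$-paths, the segments of the rays, and the $R$--$U$ or $v$--$U$ extensions can all interfere with one another. We handle this throughout with a single inductive greedy selection: at each step only finitely many vertices are used, and at each stage there are infinitely many disjoint candidates to choose from.
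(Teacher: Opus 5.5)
Your proof is correct and is essentially the paper's argument. In both, the $x$--$Y$~paths are continued to $U$ through the vertices of $V$ or along the rays of $\mathcal{R}$, using their attachedness, so that $x$ becomes $U$--in-attached and hence $x \in V \subseteq Y$, a contradiction. The paper organises disjointness more economically: it shows that for every finite $X \subseteq V(D)\setminus\{x\}$ some single $x$--$U$~path avoids $X$, and the fan then follows by a one-line greedy choice. That removes your case-by-case interference bookkeeping. For instance, in your subcase with infinitely many rays $R_i$, the traversed piece of $R_i$ must also miss earlier extensions; you get this by choosing $R_i$ disjoint from all earlier choices.
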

		\begin{claimproof}
			Suppose for a contradiction that there is a $Y$--in-attached vertex $v$ in $V(D) \setminus Y$.
			We show that for every finite set $X \subseteq V(D) \setminus \{v\}$ there is a $v$--$U$~path that avoids $X$.
			Then there is an infinite family of $v$--$U$~paths that intersect only in $v$, which implies that $v$ is $U$--in-attached and contradicts the maximality of $V$.
			
			Let $X \subseteq V(D) \setminus \{v\}$ be an arbitrary finite set.
			Then there is an infinite family of $v$--$Y$~paths that intersect only in $v$ and avoid $X$.
			Since $X$ intersects only finitely many elements of $\mathcal{R}$, there is a $v$--$Y$~path that avoids $X$ and ends either in $U \cup V$ or in a vertex of some $R\in \mathcal{R}$ that avoids $X$.
			We can deduce that there is a $v$--$U$~path that avoids $X$.
		\end{claimproof}
		\begin{claim}\label{clm:no_attached_ray}
			There is no $Y$--in-attached out-ray that avoids $Y$.
		\end{claim}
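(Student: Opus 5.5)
We argue by contradiction, mirroring the proof of \cref{clm:no_attached_vertex}: we show that a $Y$--in-attached out-ray avoiding $Y$ would have a tail that is $U$--in-attached and disjoint from $\bigcup \mathcal{R}$, contradicting the maximality of $\mathcal{R}$. So suppose $R$ is an out-ray avoiding $Y$ with an infinite family $\mathcal{P}$ of disjoint $R$--$Y$~paths. Each path of $\mathcal{P}$ ends in $U$, in $V$, or on some ray of $\mathcal{R}$.

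The key step is to show that for every finite $X \subseteq V(D)$ there is an $R$--$U$~path avoiding $X$. Given $X$, only finitely many paths of $\mathcal{P}$ meet $X$, and $X$ meets only finitely many rays of $\mathcal{R}$. We distinguish three cases according to where infinitely many paths of $\mathcal{P}$ end.
\begin{itemize}
\item If infinitely many paths end in $U$, we pick one avoiding $X$.
\item If infinitely many paths end in $V$, we pick one avoiding $X$, ending in some $w \in V$. Since $w$ is $U$--in-attached, there are infinitely many $w$--$U$~paths meeting only in $w$, and one of them avoids $X$. The concatenation contains an $R$--$U$~walk avoiding $X$, hence an $R$--$U$~path avoiding $X$.
\item Otherwise infinitely many paths end on rays of $\mathcal{R}$. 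Here we must be careful: if each ray of $\mathcal{R}$ receives only finitely many endpoints, then infinitely many rays are hit, and all but finitely many of them avoid $X$. Either way we obtain a path $P \in \mathcal{P}$ avoiding $X$ and ending at a vertex $x$ of some $R' \in \mathcal{R}$, and we may choose it so that the tail $xR'$ avoids $X$. (In the second subcase $R'$ avoids $X$ entirely; in the first, infinitely many endpoints lie on one ray, so we take one beyond the last vertex of $X$ on it.) Since $R'$ is $U$--in-attached, infinitely many disjoint $R'$--$U$~paths start on $xR'$, and one of them avoids $X$. Following $P$, then $xR'$, then this path gives an $R$--$U$~path avoiding $X$.
\end{itemize}

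Iterating this step yields infinitely many disjoint $R$--$U$~paths: put the vertices of the previously chosen paths into $X$ each time. Hence $R$ is $U$--in-attached. Since $R$ avoids $Y \supseteq \bigcup\mathcal{R}$, the family $\mathcal{R} \cup \{R\}$ is a larger family of disjoint $U$--in-attached out-rays, contradicting the maximality of $\mathcal{R}$.

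The main obstacle is the third case: we must route along a ray of $\mathcal{R}$ while avoiding $X$. This is handled by choosing the landing point beyond $X$ on that ray and using that $R'$ has infinitely many disjoint exits to $U$.
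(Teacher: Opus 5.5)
Your proof is correct and follows the same route as the paper: show that for every finite $X$ there is an $R$--$U$~path avoiding $X$ by routing through $U$, through a vertex of $V$, or along a ray of $\mathcal{R}$, and conclude that $R$ is $U$--in-attached, contradicting the maximality of $\mathcal{R}$. Your third case is in fact more careful than the paper's one-line version. The paper implicitly assumes that some landing ray of $\mathcal{R}$ avoids $X$, whereas you also handle the case where a single ray is hit infinitely often, by landing beyond the last vertex of $X$ on that ray.
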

		\begin{claimproof}
			Suppose for a contradiction that there is a $Y$--in-attached out-ray $R$ that avoids $Y$.
			We show that for every finite set $X \subseteq V(D)$ there is an $R$--$U$~path that avoids $X$.
			Then there infinitely many disjoint $R$--$U$~paths, which implies that $R$ is $U$--in-attached and contradicts the maximality of $\mathcal{R}$.
			
			Let $X \subseteq V(D)$ be an arbitrary finite set.
			Then there are infinitely many disjoint $R$--$Y$~paths that avoid $X$.
			Since $X$ intersects only finitely many elements of $\mathcal{R}$, there is an $R$--$Y$~path that avoids $X$ and ends either in $U \cup V$ or in a vertex of some $R\in \mathcal{R}$ that avoids $X$.
			We can deduce that there is an $R$--$U$~path that avoids $X$.
		\end{claimproof}
		
		By~\cref{prop:vertex_or_ray}, \Cref{clm:no_attached_ray,clm:no_attached_vertex} imply that there is a finite $v$--$Y$~separator contained in $Y$ for every $v \in V(D) \setminus Y$.
		Furthermore, $\{v\}$ is a finite $v$--$Y$~separator for every $v \in Y$.
		
		It remains to prove that there is a finite $\omega$--$Y$~separator contained in $Y$ for every $\omega \in \Omega(D)$ for which there is a finite $\omega$--$U$~separator.
		Let $\omega \in \Omega(D)$ be an arbitrary end for which there is a finite $\omega$--$U$~separator $X$.
		Note that $X$ is a $\vC(\omega, X)$--$(V(D) \setminus \vC(\omega, X))$~separator, and thus, in particular, a $\vC(\omega, X)$--$U$~separator.
		Thus $\vC(\omega, X) \cap V = \emptyset$.
		Furthermore, every out-ray $R \in \mathcal{R}$ that intersects $\vC(\omega, X)$ has finite intersection with $\vC(\omega, X)$ and contains an element of $X$.
		We can deduce that $\vC(\omega, X) \cap Y$ is finite.
		For every $x \in X$, let $S_x$ be a finite $x$--$Y$~separator contained in $Y$.
		Note that every $\omega$--$Y$~track ends either in $\vC(\omega, X) \cap Y$ or intersects $X$, in which case it contains an $X$--$Y$~path.
		Then $(\vC(\omega, X) \cap Y) \cup \bigcup_{x \in X} S_x$ is a finite $\omega$--$Y$~separator contained in $Y$.
		This completes the proof.
	\end{proof}
	
	Applying~\cref{lem:in-envelope} to the digraph obtained by reversing all orientations shows:
	
	\begin{lemma} \label{lem:out-envelope}
		Let $D$ be a digraph and let $U \subseteq V(D)$.
		Then there exists an out-envelope for $U$. \qed
	\end{lemma}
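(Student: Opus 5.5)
The statement is \cref{lem:out-envelope}, and I will deduce it from \cref{lem:in-envelope} by reversing every edge. Let $\overleftarrow{D}$ be the digraph on $V(D)$ obtained from $D$ by reversing the orientation of every edge. The plan is to write down a dictionary between $D$ and $\overleftarrow{D}$, apply \cref{lem:in-envelope} to $U$ in $\overleftarrow{D}$, and translate the resulting in-envelope back into an out-envelope for $U$ in $D$.

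The dictionary consists of the following correspondences, each obtained by reversing orientations.
\begin{itemize}
\item Directed paths from $x$ to $y$ in $D$ are exactly the reversals of directed paths from $y$ to $x$ in $\overleftarrow{D}$.
\item In-rays of $D$ are the reversals of out-rays of $\overleftarrow{D}$, and vice versa.
\item Two in-/out-rays are equivalent in $D$ if and only if their reversals are equivalent in $\overleftarrow{D}$: the definition is symmetric and requires infinitely many disjoint paths in both directions, and the two directions simply swap.
\item Consequently $\Omega(D)$ and $\Omega(\overleftarrow{D})$ are in canonical bijection $\omega \mapsto \overleftarrow{\omega}$.
\item For sets $X,Z$ of vertices and ends, the $X$--$Z$~tracks of $D$ are precisely the reversals of the $Z$--$X$~tracks of $\overleftarrow{D}$. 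This covers all four types: a directed path, an in-ray in an end of $X$ ending in $Z$, an out-ray in an end of $Z$ starting in $X$, and a double ray. The condition on internal vertices is unchanged.
\item Hence a set $S \subseteq V(D)$ meets every $X$--$Z$~track of $D$ if and only if it meets every $Z$--$X$~track of $\overleftarrow{D}$.
\end{itemize}
In particular, a finite $Y$--$v$ (respectively $Y$--$\omega$) separator in $D$ is the same vertex set as a finite $v$--$Y$ (respectively $\overleftarrow{\omega}$--$Y$) separator in $\overleftarrow{D}$. This also holds for the dispersedness part of the separator definition, because for finite sets dispersedness is automatic, as remarked in \cref{sec:prelim}.

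Now apply \cref{lem:in-envelope} to $U$ in $\overleftarrow{D}$ to obtain an in-envelope $Y \supseteq U$. Then $Y$ is an out-envelope for $U$ in $D$:
\begin{itemize}
\item For every $v \in V(D)$, the finite $v$--$Y$~separator contained in $Y$ in $\overleftarrow{D}$ is a finite $Y$--$v$~separator in $D$.
\item If $\omega \in \Omega(D)$ admits a finite $U$--$\omega$~separator in $D$, then $\overleftarrow{\omega}$ admits a finite $\overleftarrow{\omega}$--$U$~separator in $\overleftarrow{D}$. So $Y$ contains a finite $\overleftarrow{\omega}$--$Y$~separator in $\overleftarrow{D}$, which is a finite $Y$--$\omega$~separator in $D$.
\end{itemize}

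The only point needing real care is the track correspondence for ends: an in-ray lying in an end of $X$ must reverse to an out-ray lying in the corresponding end. This holds because end membership is defined through the symmetric equivalence relation, so it is preserved under reversal. Everything else is direct bookkeeping.
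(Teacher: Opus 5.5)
Your proof is correct and is the same argument as the paper's, which gets the out-envelope lemma in one line by applying \cref{lem:in-envelope} to the digraph with every orientation reversed. Your dictionary spells out the bookkeeping the paper leaves implicit: ends, tracks and finite separators all correspond under reversal, and dispersedness is automatic for finite sets.
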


	\begin{proof}[Proof of \cref{lem:existence_separator}]
		Let $\mathcal{R}$ be some $\subseteq$--maximal family of disjoint out-rays in $D -(\Down{B} \cap V(D))$ such that each element of $\mathcal{R}$ is contained in some end of $\Down{B}$.
		First, we set $X := (\Down{B} \cap V(D)) \cup \bigcup_{R \in \mathcal{R}} V(R)$ and note that there is no $A$--$B$~track in $D-X$ by maximality of $\mathcal{R}$.
		We show that there is a finite $a$--$X$~separator for every $a \in A\cap \Omega(D)$.
		
		For $a \in A \cap \Omega(D)$ let $U_a$ be a finite $a$--$B$~separator.
		There are at most $|U_a|$ elements of $\mathcal{R}$ that intersect $\Cv(a,U_a)$ and each such element intersects $\Cv(a,U_a)$ finitely.
		Then the union of $U_a$ and all these vertices is a finite $a$--$X$~separator.
		
		Second, we apply~\cref{lem:in-envelope} to $X$, which provides an in-envelope $X'$ for $X$.
		Thus there is a finite $a$--$X'$~separator in $X'$ for every $a \in A$.
		Let $V_a$ be a minimal $a$--$X'$~separator in $X'$, i.e. $V_a=\{x \in X': \text{ there is an } A \text{--} X' \text{ track ending in } x \}$, for every $a \in A$.
		
		Third, we set $V := \bigcup_{a \in A} V_a$.
		By the choice of $V_a$ and since there is no $A$--$B$~track in $D - X$, there is no $A$--$B$~track in $D - V$.
		Let $b \in B \cap \Omega(D)$ be arbitrary and let $Y_b$ be an arbitrary finite $A$--$b$~separator.
		We prove that $V \cap \vC(b, Y_b)$ is finite.
		
		Suppose for a contradiction that $V \cap \vC(b, Y_b)$ is infinite. 
		By the construction of $V$ and since every $A$--$\vC(b, Y_b)$~track contains a vertex of $Y_b$, there is $y \in Y_b \setminus V$ such that there is an $a$--$y$~track in $D - V$ for some $a \in A$ and there are infinitely many $y$--$V$~paths that end in distinct vertices.
		This contradicts the choice of the finite set $V_a$ and thus proves that $V \cap \vC(b, Y_b)$ is finite.
		
		Fourth, we set $Z := \bigcup_{a \in A \cap \Omega(D)} \Cv(a,V_a)$ and let $\bar{V} := V \cup ((A \cap V(D)) \setminus Z)$.
		Note that $((A \cap V(D)) \setminus Z) \cap \vC(b, Y_b) = \emptyset$ for every $b \in B \cap \Omega(D)$ and every finite $A$--$b$~separator $Y_b$.
		Thus $\bar{V}\cap \Cv(b,Y_b)$ is finite, which implies that $Y_b \cup (\bar{V}\cap \Cv(b,Y_b))$ is a finite $\bar{V}$--$b$~separator for $b \in B \cap \omega(D)$.
		Note that $Z \cap B = \emptyset$ by the choice of $Z$.
		
		We apply~\cref{lem:out-envelope} to $\bar{V}$ in the subgraph $D - Z$ to obtain an out-envelope $S$ for $\bar{V}$ in $D -Z$.
		We show that $S$ is the desired $A$--$B$~separator.
		Note that there is no $A$--$B$~track in $D - S$ since $V \subseteq S$.
		Since $S$ is an out-envelope for $\bar{V}$ in $D -Z$, there is a finite $S$--$b$~separator $W_b$ in $D- Z$ that is contained in $S$ for every $b \in B$.
		Note that $W_b$ is also an $S$--$b$~separator in $D$ since every $Z$--$b$~track in $D$ intersects $V\subseteq S$.
		
		It remains to prove that there is a finite $a$--$S$~separator contained in $S$ for every $a \in A$.
		For every $a \in A \cap \Omega(D)$, the set $V_a$ is a finite $a$--$S$~separator since $S \cap \Cv(a,V_a) = \emptyset$.
		Similarly, for every $a \in A \cap V(D) \cap Z$, there is $a' \in A \cap \Omega(D)$ such that $a \in \Cv(a',V_{a'})$, which implies that $V_{a'}$ is a finite $a$--$S$~separator in $S$.
		Finally, for every $a \in (A \cap V(D))\setminus Z \subseteq S$, the set $\{a\}$ is a finite $a$--$S$~separator in $S$.
		This completes the proof.
	\end{proof}
	
	\section{Proof of the main theorem} \label{sec:main_proof}
	
	We begin by proving:
	\begin{theorem}\label{thm:small_separator}
		Let $D$ be a digraph and let $A$ and $B$ be sets of vertices and ends of $D$ such that $(A,B)$ is dispersed.
		For every $\subseteq$-maximal family $\mathcal{F}$ of disjoint ${A}$--${B}$~tracks in $D$, there is an $A$--$B$~separator that is finite, if $\mathcal{F}$ is finite, and is of size $|\mathcal{F}|$ otherwise.
	\end{theorem}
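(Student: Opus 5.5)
We handle the two cases of \cref{thm:small_separator} separately, and in both we reduce to the separator provided by \cref{lem:existence_separator}. Let $\mathcal{F}$ be a $\subseteq$-maximal family of disjoint $A$--$B$~tracks and put $F := \bigcup_{T \in \mathcal{F}} V(T)$. By maximality, every $A$--$B$~track meets $F$. However, $F$ itself need not be an $A$--$B$~separator, because $F$ may converge to ends in $A$ or $B$. So the plan is to shrink $F$ onto a separator of the right size using the ``separator contained in $S$'' property of \cref{lem:existence_separator}.

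\emph{Infinite case.} Let $\kappa := |\mathcal{F}|$ be infinite. Apply \cref{lem:existence_separator} to $(A,B)$ in the subdigraph $D$, but with a modified target. Let $B'$ consist of $B$ together with all vertices of $F$ that lie on tracks ending in ends of $B$. Better, apply it directly: take the separator $S$ given by \cref{lem:existence_separator} and replace it by
\[ S' := \bigcup_{T \in \mathcal{F}} \bigl( V(T) \cap (\text{finite $a$--$S$ and $S$--$b$ separators in } S \text{ for the ends } a,b \text{ of } T) \bigr) \cup (F \cap S). \]
The intended point is this: every track $T\in\mathcal{F}$ starting in an end $a$ passes through the finite $a$--$S$~separator $S_a\subseteq S$, and it must meet $S$ anyway. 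For each $T$ we choose a finite set $X_T \subseteq V(T)$ consisting of the first and last vertex of $T$ on $S$, together with the vertices of $T$ on the finite separators $S_a$ and $S_b$ of its ends. We then set $S'$ to be the union of the $X_T$, so $|S'| \le \kappa$. We also need $|S'| \ge \kappa$, since each track contributes at least one vertex. The key claim is that every $A$--$B$~track $P$ meets $S'$. We argue as follows. If $P$ avoided $S'$, then since $P$ meets $F$, we could splice $P$ with a segment of some $T\in\mathcal{F}$ that avoids $S$. This would produce an $A$--$B$~track avoiding $S$, a contradiction. Dispersedness of $(A,S')$ and $(S',B)$ should follow because $S'\cap \Cv(a,S_a)$ is controlled by the finitely many tracks meeting $S_a$.

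\emph{Finite case.} Here $\mathcal{F}$ is finite and $F$ is a finite union of tracks. We need a finite separator. Take $S$ from \cref{lem:existence_separator} and set $S'$ to be the union of $S_a \cup S_b$ over the (finitely many) ends $a,b$ used by tracks of $\mathcal{F}$, together with the first vertices of the tracks of $\mathcal{F}$ on $S$. This set is finite because each of these separators is finite, and finite sets are automatically dispersed with respect to $A$ and $B$. It remains to check that every $A$--$B$~track $P$ meets $S'$. We plan to do this by the same splicing argument: $P$ meets some $T\in\mathcal{F}$, and rerouting along $T$ toward its end yields a track avoiding $S$ unless $P$ hits $S'$.

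\emph{Main obstacle.} The splicing step is the main obstacle. A union of $P$ with a segment of $T$ need not contain an $A$--$B$~track avoiding $S$, because the segment of $T$ beyond the meeting point may run through $S$ at vertices we did not include. I would try first to include in $S'$ \emph{all} vertices of $T \cap S$ that lie between the finite separators $S_a$ and $S_b$ on $T$. I would then argue, via the finite separators and the definition of $\Cv(a,\cdot)$ and $\vC(b,\cdot)$, that only finitely many such vertices exist per track. This needs care for tracks whose ends are in $\Up{A}$ but not in $A$.
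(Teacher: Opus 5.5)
There is a genuine gap, and it is the splicing step you yourself single out. You aim the contradiction at $S$: from an $A$--$B$ track $P$ avoiding $S'$, you splice in a piece of some $T\in\mathcal{F}$ to get an $A$--$B$ track avoiding $S$. But $P$ only avoids $S'$, and in both of your constructions $S'\subseteq S$. So $P$ may pass through $S\setminus S'$, and nothing is contradicted.

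In fact the sets you define are not separators in general. Take vertices $a,a',s,x,t,b,y,z,b'$ with edges $as,sx,xt,tb,a'y,yx,xz,zb'$, and let $A=\{a,a'\}$, $B=\{b,b'\}$.
Every $A$--$B$ path passes through $x$, so $\mathcal{F}=\{asxtb\}$ is maximal. The set $S=\{s,y,t\}$ has all the properties that \cref{lem:existence_separator} provides, with $S_a=\{s\}$, $S_{a'}=\{y\}$, $S_b=\{t\}$, $S_{b'}=\{s,y\}$.
Your finite-case set is contained in $\{s,t\}$, and your $X_T$ is $\{s,t\}$. Your proposed repair, adding the vertices of $T\cap S$ between $S_a$ and $S_b$, adds nothing. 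Yet $a'yxzb'$ avoids $\{s,t\}$. Infinitely many disjoint copies of this digraph defeat the infinite case in the same way. The vertex that is needed, $x$, does not lie in $S$ at all.

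The missing idea is to put whole finite pieces into $S'$ and to aim the contradiction at the finite separators instead of at $S$. For $F\in\mathcal{F}$ starting in $a(F)\in A$ and ending in $b(F)\in B$, let $P(F)$ be the minimal subpath of $F$ containing the finitely many vertices of $F$ in $S_{a(F)}\cup S_{b(F)}$. Set $S':=\bigcup_{F\in\mathcal{F}}\bigl(S_{a(F)}\cup S_{b(F)}\cup V(P(F))\bigr)$. Each track contributes a finite set, so this one construction handles both the finite and the infinite case.

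Now suppose an $A$--$B$ track $P$ avoids $S'$. By maximality it meets some $F$ in a vertex $x\notin V(P(F))$. So $x$ lies on the part of $F$ before or after $P(F)$, and both parts avoid $S_{a(F)}\cup S_{b(F)}$.
\begin{itemize}
\item In the first case, the segment of $F$ up to $x$ together with $xP$ contains an $a(F)$--$B$ track in $D-S_{a(F)}$. This contradicts that $S_{a(F)}\subseteq S$ separates $a(F)$ from $S$, and hence from $B$.
\item The second case is symmetric with $S_{b(F)}$.
\end{itemize}

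Since now $S'\not\subseteq S$, dispersedness is no longer inherited from $S$, so your remark that $S'$ is ``controlled'' has to be made precise. For $a\in A$, take $S_a\cup\bigcup\{V(P(F)):V(F)\cap S_a\neq\emptyset\}$. This is finite because only finitely many of the disjoint tracks meet the finite set $S_a$. An $a$--$S'$ track avoiding this set ends either in $S$, or on some $P(F)$ with $F$ disjoint from $S_a$. In the latter case it can be continued along $F$ to the last vertex of $P(F)$, which lies in $S_{a(F)}\cup S_{b(F)}\subseteq S$. Either way this contradicts the choice of $S_a$.

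Your concern about ends in $\Up{A}\setminus A$ does not arise here, since $\mathcal{F}$ consists of $A$--$B$ tracks.
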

	\noindent
	From~\cref{thm:small_separator} we deduce, first, that the main theorem is well-defined (\cref{prop:well_defined}), second, that the main theorem (\cref{thm:main}) holds true, and, third, that $A$--$B$~double rays are ubiquitous (\cref{prop:ubiquity}).
	The proofs of~\Cref{thm:small_separator,thm:main} follow the lines of Polat's proof of \cref{polat}~\cite{polat1991mengerian}*{Theorem 3.4}.
	
	\begin{proof}
		By~\cref{lem:existence_separator}, there exists an $A$--$B$~separator $S$ that contains a finite $a$--$S$~separator $S_a$ for every $a \in A$ and a finite $S$--$b$~separator $S_b$ for every $b \in B$.
		For $F \in \mathcal{F}$ let $a(F)$ be the element of $A$ in which $F$ starts and let $b(F)$ be the element of $B$ in which $F$ terminates.
		Since $S_{a(F)}$ and $S_{b(F)}$ are finite, there exists a finite path $P(F)$ in $F$ that contains all vertices of $V(F) \cap (S_{a(F)} \cup S_{b(F)})$.
		We set $S':= \bigcup_{F \in \mathcal{F}} S_{a(F)} \cup S_{b(F)} \cup V(P(F))$ and show that $S'$ is an $A$--$B$~separator.
		Note that $S'$ is finite, if $\mathcal{F}$ is finite, and is of size $|\mathcal{F}|$ otherwise.
		
		We begin by proving that there is no $A$--$B$~track in $D - S'$.
		Suppose for a contradiction that there is an $A$--$B$~track $T$ in $D - S'$.
		By maximality of $\mathcal{F}$ there is $F \in \mathcal{F}$ such that there exists some vertex $x \in V(F) \cap V(T)$.
		Since $V(F) \cap (S_{a(F)} \cup S_{b(F)}) \subseteq V(P(F)) \subseteq S'$, there is either an $a(F)$--$x$~track or an $x$--$b(F)$~track in $F - (S_{a(F)} \cup S_{b(F)})$.
		In the former case, the union $T \cup (F - (S_{a(F)} \cup S_{b(F)})) \subseteq D - S_{a(F)}$ contains an $a(F)$--$B$~track, which contradicts that $S_{a(F)}$ is an $a(F)$--$B$~separator.
		In the latter case, the union $T \cup (F - (S_{a(F)} \cup S_{b(F)})) \subseteq D - S_{b(F)}$ contains an $A$--$b(F)$~track, which contradicts that $S_{b(F)} $ is an $A$--$b(F)$~separator.
		
		It remains to show that $(A,S')$ and $(S',B)$ are dispersed.
		Let $a \in A$ be arbitrary.
		Then there is a finite $a$--$S$~separator $S_a$.
		We show that the finite set $S_a':= S_a \cup \{V(P(F)): F \in \mathcal{F} \text{ with } F \cap S_a \neq \emptyset\}$ is an $a$--$S'$~separator.
		Suppose for a contradiction that there is an $a$--$S'$~track $T'$ that avoids $S_a'$.
		If $T'$ ends in $\bigcup_{F \in \mathcal{F}} S_{a(F)} \cup S_{b(F)}$, then $T'$ contradicts that $S_a \subseteq S_a'$ is an $a$--$S$~separator and that $\bigcup_{F \in \mathcal{F}} S_{a(F)} \cup S_{b(F)} \subseteq S$.
		Otherwise, there is $F \in \mathcal{F}$ such that $T'$ ends in $V(P(F))$.
		By the choice of $S_a'$, $V(P(F)) \cap S_a = \emptyset$.
		Thus $T'$ can be extended to some $a$--$S_{b(F)}$~track that avoids $S_a$, contradicting the previous argument.
		Thus $(A,S')$ is dispersed.
		Similarly, we can show that $(S',B)$ is dispersed.
		This completes the proof.
	\end{proof}
	
	\begin{proposition}\label{prop:well_defined}
		Let $D$ be a digraph and let $A$ and $B$ be sets of vertices and ends of $D$ such that $(A,B)$ is dispersed.
		Then there is a family of disjoint $A$--$B$~tracks of size $\sup\{ |\mathcal{F}|: \mathcal{F} \text{ family of disjoint } A \text{--} B \text{ tracks}  \}$.
	\end{proposition}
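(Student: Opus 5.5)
Let $\kappa := \sup\{|\mathcal{F}| : \mathcal{F} \text{ family of disjoint } A\text{--}B \text{ tracks}\}$. The plan is to split according to whether this supremum is attained for trivial reasons (infinite case) or is a finite number. By Zorn's lemma every family of disjoint $A$--$B$~tracks extends to a $\subseteq$-maximal one. If $\kappa$ is finite, the supremum is a maximum of a set of natural numbers bounded by $\kappa$, so it is attained. Likewise, if $\kappa$ is infinite and attained by some family, there is nothing to prove. So the only interesting case is an infinite cardinal $\kappa$ that is not attained. I will show that this case cannot occur.

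Suppose every family has size $<\kappa$, with $\kappa$ infinite. Fix any $\subseteq$-maximal family $\mathcal{F}_0$. By \cref{thm:small_separator} there is an $A$--$B$~separator $S$ with $|S| \le \max(|\mathcal{F}_0|, \aleph_0)$: it is finite if $\mathcal{F}_0$ is finite, and has size $|\mathcal{F}_0|$ otherwise. Every $A$--$B$~track meets $S$, since $S$ is a separator. Disjoint tracks meet $S$ in disjoint sets, so every family of disjoint $A$--$B$~tracks has size at most $|S|$. Hence $\kappa \le |S| \le \max(|\mathcal{F}_0|,\aleph_0)$.

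\textbf{Case $\mathcal{F}_0$ infinite.} Then $|S| = |\mathcal{F}_0| < \kappa \le |S|$, a contradiction.

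\textbf{Case $\mathcal{F}_0$ finite.} Then $S$ is finite, so every family has size at most $|S|$. This makes $\kappa$ finite, contradicting the assumption that $\kappa$ is infinite.

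So in all cases the supremum is attained. The main (mild) obstacle is the counting step: the separator must bound the size of every family. This needs the fact that every track of a family meets $S$ in at least one vertex that no other member of the family uses, which follows from disjointness. Beyond that, all the real work is \cref{thm:small_separator}: an infinite maximal family yields a separator of the same cardinality. That is exactly what rules out a non-attained limit supremum, such as $\aleph_\omega$ or $\aleph_0$ approached only by finite families.
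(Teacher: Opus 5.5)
Your proof is correct and is essentially the paper's argument. Both take a $\subseteq$-maximal family, apply \cref{thm:small_separator} to get a separator $S$, and use that disjoint tracks inject into $S$; so the supremum is either finite, hence attained, or equal to the size of the maximal family itself. Your contradiction framing only repackages the paper's direct case split, and you spell out the counting step that the paper leaves implicit.
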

	\begin{proof}
		Let $\mathcal{T}$ be a $\subseteq$-maximal family of disjoint $A$--$B$~tracks.
		By~\cref{thm:small_separator}, there exists an $A$--$B$~separator $S$ that is finite if $\mathcal{T}$ is finite and has size $|\mathcal{T}|$ if $\mathcal{T}$ is infinite.
		In the former case, $S$ witnesses that $\sup\{ |\mathcal{F}|: \mathcal{F} \text{ family of disjoint } A \text{--} B \text{ tracks}  \}$ is finite.
		Then there exists a family of disjoint $A$--$B$~tracks of size $\sup\{ |\mathcal{F}|: \mathcal{F} \text{ family of disjoint } A \text{--} B \text{ tracks}  \}$.
		In the latter case, $S$ witnesses that $\sup\{ |\mathcal{F}|: \mathcal{F} \text{ family of disjoint } A \text{--} B \text{ tracks}  \}$ is of size $|\mathcal{T}|$ and thus $\mathcal{T}$ is as desired. 
	\end{proof}
	
	\mainTheorem*
	
	We note that $(\Up{A}, \Down{B})$ is dispersed if $(A,B)$ is dispersed.
	\begin{proof}
		By~\cref{prop:separator_closure}, every $A$--$B$~separator is an $\Up{A}$--$\Down{B}$~separator.
		Thus every $A$--$B$~separator has at least the size of the maximum number of disjoint $\Up{A}$--$\Down{B}$~tracks.
		
		Conversely, if the maximum number of disjoint $\Up{A}$--$\Down{B}$~tracks is infinite, \cref{thm:small_separator} ensures the existence of an $\Up{A}$--$\Down{B}$~separator of the same size, which is in particular an $A$--$B$~separator.
		
		If the maximum number of disjoint $\Up{A}$--$\Down{B}$~tracks is finite, \cref{thm:small_separator} ensures the existence of a finite $\Up{A}$--$\Down{B}$~separator.
		Let $S$ be an $\Up{A}$--$\Down{B}$~separator of minimum size.
		Then every $\Up{A}$--$S$~separator and every $S$--$\Down{B}$~separator has size at least $|S|$.
		Thus we can apply~\cref{lem:general} to $S$ and $\Down{B}$ in $D$ to obtain a family $\mathcal{T}$ of disjoint $S$--$\Down{B}$~tracks of size $|S|$.
		Furthermore, \cref{lem:general} applied to $\Up{A}$ and $S$ in the digraph obtained from $D$ by reversing all orientations ensures the existence of a family $\mathcal{T}'$ of disjoint $\Up{A}$--$S$~tracks in $D$ of size $|S|$.
		Note that the elements of $\mathcal{T}$ and $\mathcal{T}'$ intersect only in $S$ since $S$ is an $\Up{A}$--$\Down{B}$~separator.
		By concatenation of the elements of $\mathcal{T}$ and $\mathcal{T}'$ that intersect in $S$, we obtain $|S|$ many disjoint $\Up{A}$--$\Down{B}$~tracks.
		This proves that $S$ has the same size as a maximum family of disjoint $\Up{A}$--$\Down{B}$~tracks.
	\end{proof}
	
	Finally, we prove that $A$--$B$~tracks are \emph{ubiquitous}:
	Recall from~\cite{gut2024ubiquity} that a digraph $H$ is \emph{ubiquitous} if every digraph $D$ containing $n$ disjoint copies of $H$ for every $n \in \N$ contains infinitely many disjoint copies of $H$.
	More precisely, we show:
	\begin{lemma}\label{prop:ubiquity}
		Let $D$ be a digraph and let $A$ and $B$ be sets of vertices and ends of $D$ such that $(A,B)$ is dispersed.
		If there are $n$ disjoint $A$--$B$~tracks in $D$ for every $n \in \N$, then there are infinitely many disjoint $A$--$B$~tracks in $D$.
	\end{lemma}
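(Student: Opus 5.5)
The plan is a contradiction argument that combines \cref{thm:small_separator} with the easy direction of Menger's theorem. Suppose $D$ contains $n$ disjoint $A$--$B$~tracks for every $n \in \N$, but no infinite family of disjoint $A$--$B$~tracks. By Zorn's lemma (a union of a chain of families of disjoint tracks is again such a family), there is a $\subseteq$-maximal family $\mathcal{F}$ of disjoint $A$--$B$~tracks. By assumption $\mathcal{F}$ is finite. Since $(A,B)$ is dispersed, \cref{thm:small_separator} applies and gives a finite $A$--$B$~separator $S$.

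Next I would show that a finite $A$--$B$~separator bounds the number of disjoint $A$--$B$~tracks, so that no family of $|S|+1$ disjoint $A$--$B$~tracks exists. By definition of a separator, every $A$--$B$~track meets $S$. Disjoint tracks meet $S$ in disjoint sets of vertices, so by pigeonhole any family of disjoint $A$--$B$~tracks has size at most $|S|$. Taking $n = |S|+1$ then contradicts the hypothesis, and the lemma follows.

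The only step needing care is the application of \cref{thm:small_separator}. Its statement distinguishes the finite and infinite cases: finiteness of $S$ is guaranteed exactly when $\mathcal{F}$ is finite, and this is precisely what the contradiction assumption supplies. So the main obstacle has already been handled inside \cref{thm:small_separator}, whose construction takes the union of the finite sets $S_{a(F)}$, $S_{b(F)}$ and the paths $P(F)$ over the finitely many $F \in \mathcal{F}$. One should also check that the maximal family exists even when $D$ is uncountable; the Zorn's lemma argument above covers this. Alternatively, the existence of the maximal family can be taken directly from the proof of \cref{prop:well_defined}, which uses the same reasoning: there, a finite $S$ shows that the supremum is finite, and that contradicts the hypothesis here.
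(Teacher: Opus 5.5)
Your proposal is correct and follows essentially the same route as the paper: the paper takes a $\subseteq$-maximal family of disjoint $A$--$B$~tracks and invokes \cref{prop:well_defined}. That proposition rests on \cref{thm:small_separator}: a finite maximal family would yield a finite $A$--$B$~separator, which bounds the number of disjoint tracks. You simply spell out the Zorn's lemma and pigeonhole steps that the paper leaves implicit.
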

	
	\begin{proof}
		Let $D$ be a digraph such that there are $n$ disjoint $A$--$B$~tracks for every $n \in \N$.
		Let $\mathcal{F}$ be a $\subseteq$-maximal family of disjoint $A$--$B$~tracks.
		By~\cref{prop:well_defined}, $\mathcal{F}$ is infinite, which completes the proof.
	\end{proof}
	
	\section{An application: the combined end degree} \label{sec:combined_end_degree}
	Given some end $\omega$ of an undirected graph, the \emph{end degree} of $\omega$ is the maximum number of disjoint rays in $\omega$.
	Moreover, the \emph{combined end degree} of $\omega$ is defined as the sum of the end degree of $\omega$ and the number of $\omega$-dominating vertices \cite{gollin2022characterising}.
	The combined end degree can be characterised by the existence of an $\omega$-defining sequence of precisely that order \cite{gollin2022characterising}*{Corollary 5.8}.
	
	Hamann and Heuer~\cite{hamann2024end} recently introduced \emph{end degrees} for digraphs and adapted the characterisation of the combined end degree to the directed setting~\cite{hamann2024end}*{Theorem 5.5}:
	The \emph{degree} $d^-(\omega)$ of an end $\omega$ is the maximum number of disjoint out-rays in $\omega$.
	The \emph{combined end degree} is defined as
	\[\Delta^-(\omega):= d^-(\omega) + \inf\{S \subseteq V(D): S \text{ is } \omega \text{-separating}\},
	\]
	where $\Delta^-(\omega) \in \N \cup \{\infty\}$.
	A set $S \subseteq V(G)$ is \emph{$\omega$-separating}\footnote{Hamann and Heuer called this property `separating $\omega^- \cup \operatorname{dom}(\omega)$ from $\omega$' \cite{hamann2024end}. However, we use a different notation to avoid confusion.} if for every out-ray $R \in \omega$ there exists a tail $Q$ such that $S$ intersects
	\begin{itemize}
		\item every path starting in $V(Q)$ and ending in an $\omega$--out-dominating vertex, and
		\item every out-ray of some end in $\{\alpha: \alpha \leq \omega\} \setminus \{\omega\}$ that starts in $V(Q)$.
	\end{itemize}
	
	We take a different perspective on the combined end degree in digraphs in terms of `dominating' tracks:
	
	\begin{lemma}\label{lem:combined_end_degree}
		Let $\omega$ be an end of a digraph $D$ such that $d^-(\omega) \geq 1$.
		Then $\Delta^-(\omega)$ is equal to the sum of $d^-(\omega)$ and the maximum number of disjoint $\Up{\omega}$--$\mathring{\Down{\omega}}$~tracks.
	\end{lemma}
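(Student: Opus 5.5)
Since $\Delta^-(\omega)$ is defined as $d^-(\omega)+\inf\{|S|: S \text{ is } \omega\text{-separating}\}$, I would first split off the summand $d^-(\omega)$ and then prove an equality for the remaining term. If $d^-(\omega)=\infty$, then both $\Delta^-(\omega)$ and $d^-(\omega)+k$ are $\infty$ for every $k$, so the statement holds trivially. If $d^-(\omega)$ is finite, adding the same finite number to both sides preserves equality in $\N\cup\{\infty\}$. Hence, writing $k(\omega)$ for the maximum number of disjoint $\Up{\omega}$--$\mathring{\Down{\omega}}$~tracks, it suffices to show
\[
\inf\{|S| : S \text{ is } \omega\text{-separating}\} = k(\omega).
\]
Adding $d^-(\omega)$ back then gives exactly $\Delta^-(\omega)=d^-(\omega)+k(\omega)$. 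Here I read $\mathring{\Down{\omega}}$ as $\Down{\omega}\setminus\{\omega\}$: the $\omega$--out-dominating vertices together with the ends $\alpha\le\omega$ with $\alpha\neq\omega$. These are exactly the objects named in the two bullets of the definition of $\omega$-separating. By \cref{prop:well_defined}, $k(\omega)$ is attained.

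\emph{Lower bound.} Let $S$ be $\omega$-separating and let $\mathcal{T}$ be a family of disjoint $\Up{\omega}$--$\mathring{\Down{\omega}}$~tracks. I would show that every $T\in\mathcal{T}$ meets $S$. Fix an out-ray $R\in\omega$, which exists since $d^-(\omega)\ge 1$, and take the tail $Q$ given by the definition. The start of $T$ lies in $\Up{\omega}$, so it is linked to $R$ by infinitely many paths or rays. Using a finite $\omega$--$S$~separator (the analogue of \cref{prop:separator_closure}), I reroute the start of $T$ so that it begins on $Q$ while avoiding $S$ outside $T$. This produces either a path from $V(Q)$ to an $\omega$--out-dominating vertex, or an out-ray from $V(Q)$ into an end $\alpha\le\omega$ with $\alpha\ne\omega$. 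By definition, $S$ must meet it, and it must do so inside $T$. Therefore $|S|\ge|\mathcal{T}|$.

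\emph{Upper bound.} I would take $A:=\{\omega\}$ and $B:=\mathring{\Down{\omega}}$ and produce an $A$--$B$~separator of size $k(\omega)$, using \cref{thm:main} or, if its hypotheses fail, \cref{thm:small_separator} together with \cref{lem:general}. I would then show that any such separator $X$ is $\omega$-separating. Dispersedness of $(A,X)$ gives a finite $\omega$--$X$~separator, and I choose the tail $Q$ of $R$ beyond it. Any path from $V(Q)$ to an $\omega$--out-dominating vertex, or any out-ray from $V(Q)$ into some $\alpha\le\omega$ with $\alpha\ne\omega$, combines with $R$ into an $\omega$--$B$~track. Such a track must meet $X$, and the choice of $Q$ forces the intersection to lie on the given path or ray. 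It remains to check that $(A,B)$ is dispersed. The $B$-side holds because an end $\alpha\ne\omega$ with $\alpha\le\omega$ is separated from $\omega$ by a finite set (otherwise $\alpha=\omega$). The $\omega$-side follows from the finiteness of the infimum, and when the infimum is infinite both sides are $\infty$ by \cref{prop:ubiquity}.

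\emph{Main obstacle.} The hypothesis of \cref{thm:main} that every end in $A$ contains an in-ray may fail, since $\omega$ is only known to contain an out-ray. I would first try to circumvent this by reproving the finite case directly. Pick a minimum $\{\omega\}$--$B$~separator $S$ from \cref{thm:small_separator}. Apply \cref{lem:general} to $S$ and $B$ to obtain $|S|$ disjoint $S$--$\Down{B}$~tracks. Then link $\omega$ to $S$ by $|S|$ disjoint tracks, starting them on disjoint out-rays of $\omega$ rather than on in-rays. Making this linking step work without in-rays is where I expect the real difficulty to lie.
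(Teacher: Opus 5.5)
\textbf{There is a genuine gap in the upper bound, and a second one in how you handle an infinite infimum.}

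Your lower bound is fine. It is the first half of \cref{prop:separating_comparison}: only finite $\omega$-separating sets matter, and for those no auxiliary $\omega$--$S$ separator is needed.

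The upper bound fails because you take the source to be $\{\omega\}$. A track out of $\omega$ must begin with an in-ray of $\omega$, or be a double ray whose initial segment lies in $\omega$. So prefixing the out-ray $R$ to a path leaving $V(Q)$ does not give an $\omega$--$\mathring{\Down{\omega}}$ track. If $\omega$ has an in-ray, this can be repaired by routing an in-ray of $\omega$ onto $R$ in front of $Q$. But the lemma only assumes $d^-(\omega)\ge 1$, and without in-rays your claim that every $\{\omega\}$--$\mathring{\Down{\omega}}$ separator is $\omega$-separating is false, not merely hard to prove. Take an out-ray $v_1\to v_2\to\cdots$ and one further vertex $x$ with edges $v_i\to x$ and $x\to v_i$ for all $i$. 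There is no in-ray, $d^-(\omega)=1$, and $x$ is both $\omega$--in- and $\omega$--out-dominating, so the trivial path $x$ gives $k(\omega)=1$. Yet $\emptyset$, and likewise $\{v_1\}$, is an $\{\omega\}$--$\mathring{\Down{\omega}}$ separator, while no finite set avoiding $x$ is $\omega$-separating. So the obstacle you flag at the end is not a linking problem waiting to be solved.

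The paper never uses tracks out of $\omega$ itself. It takes $\Up{\omega}$ as the source and shows directly that every finite $\Up{\omega}$--$\mathring{\Down{\omega}}$ separator $\hat S$ is $\omega$-separating. Suppose not. Then the offending paths or out-rays $P(\tilde Q)$, for ever deeper tails $\tilde Q$, avoid $\hat S$ but all meet a fixed finite $\omega$-separating set $U$. So some $u\in U$ lies on infinitely many of them. Their initial segments give an in-arborescence rooted at $u$ that avoids $\hat S$ and has infinitely many vertices on the ray. It contains either a vertex of infinite in-degree, which is $\omega$--in-dominating, or an in-ray, which lies in an end $\alpha\ge\omega$. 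Either way you get an $\Up{\omega}$--$\mathring{\Down{\omega}}$ track avoiding $\hat S$, a contradiction. Then \cref{thm:main} is applied to $(\Up{\omega},\mathring{\Down{\omega}})$, which is dispersed because a finite separator exists. An end of the source without in-rays is harmless there: it starts no tracks, and its in-dominating vertices and the ends above it already lie in $\Up{\omega}$.

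Second, you never show that a finite $\omega$-separating set exists when $d^-(\omega)<\infty$. This is needed both to supply the set $U$ above and to rule out an infinite infimum. Appealing to \cref{prop:ubiquity} cannot do this, since ubiquity presupposes a dispersed pair and $n$ disjoint tracks for every $n$, which is exactly what is in question. In fact, the absence of a finite $\omega$-separating set does not force $k(\omega)=\infty$. Take the out-ray $v_1\to v_2\to\cdots$ with extra vertices $x_j$ and edges $v_j\to x_j$ and $x_j\to v_i$ for $i>j$. There is no in-ray and no $\omega$--in-dominating vertex, so $k(\omega)=0$. Yet the disjoint paths $v_j\to x_j$, ending in the $\omega$--out-dominating vertices $x_j$, force every $\omega$-separating set to be infinite. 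Here $d^-(\omega)=\infty$, so the lemma itself survives.

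This case can therefore only be closed through $d^-(\omega)$, which is the content of \cref{prop:finite_separating}. If $d^-(\omega)=k<\infty$ and no finite $\omega$-separating set exists, repeated use of finite Menger grows $k+1$ disjoint paths into $k+1$ disjoint out-rays of $\omega$, a contradiction. Accordingly, the paper splits cases according to whether a finite $\omega$-separating set exists, not according to whether $d^-(\omega)$ is finite.
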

	\noindent
	Here, $\mathring{\Down{\omega}}:= {\Down{\omega}} \setminus \{\omega\}$.
	\begin{proposition}\label{prop:separating_comparison}
		Let $\omega$ be an end of a digraph $D$ such that $d^-(\omega) \geq 1$ and there is a finite $\omega$-separating set.
		Then a finite set of vertices is $\omega$-separating if and only if it is an $\Up{\omega}$--$\mathring{\Down{\omega}}$~separator.
	\end{proposition}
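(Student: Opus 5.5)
We prove both implications by comparing tracks directly. Throughout, fix the end $\omega$ and write $\mathring{\Down{\omega}}$ for the set of $\omega$--out-dominating vertices together with the ends $\alpha \neq \omega$ with $\alpha \leq \omega$. (Reading the definitions, $\Down{\{\omega\}}$ consists of $\omega$, these ends and the out-dominating vertices.) An $\Up{\omega}$--$\mathring{\Down{\omega}}$~track then starts either at $\omega$ via an in-ray, or at an end $\beta \geq \omega$, or at an $\omega$--in-dominating vertex. It ends at an $\omega$--out-dominating vertex or in an end $\alpha<\omega$.

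\emph{Separating implies separator.} Let $S$ be finite and $\omega$-separating. Since $S$ is finite, $(\Up{\omega},S)$ and $(S,\mathring{\Down{\omega}})$ are automatically dispersed. It therefore suffices to show that $D-S$ contains no $\Up{\omega}$--$\mathring{\Down{\omega}}$~track $T$. Suppose $T$ exists, and fix an out-ray $R\in\omega$ (using $d^-(\omega)\geq1$) together with the tail $Q$ given by the definition of $\omega$-separating.

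First we move the start of $T$ onto $Q$, using a suitable tail of $Q$ avoiding the finite set $S$:
\begin{itemize}
\item If $T$ starts at $\omega$ or at an end $\beta\geq\omega$, then infinitely many disjoint paths join $Q$ to rays in that end and then on to $T$. Only finitely many of these meet $S$, so some $Q$--$T$~path avoids $S$.
\item If $T$ starts at an in-dominating vertex $v$, we use the infinitely many $Q$--$v$~paths meeting only in $v$ in the same way.
\end{itemize}
Concatenating yields a path or out-ray in $D-S$ starting in $V(Q)$. It either ends at an $\omega$--out-dominating vertex or has a tail in an end $\alpha<\omega$, $\alpha\neq\omega$. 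Either way this contradicts $S$ being $\omega$-separating. The main care here is ensuring that the tail in $\alpha$ is really an out-ray of $\alpha$ starting in $V(Q)$, which is obtained by trimming the concatenation.

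\emph{Separator implies separating.} Let $S$ be a finite $\Up{\omega}$--$\mathring{\Down{\omega}}$~separator, and let $R\in\omega$ be an out-ray. Since $S$ is finite, $R$ has a tail $Q$ avoiding $S$. Suppose some path $P$ from $V(Q)$ to an $\omega$--out-dominating vertex, or some out-ray $P$ from $V(Q)$ in an end $\alpha<\omega$, avoids $S$. Then $Q$ is a ray of $\omega$, so $\omega\in\Up{\omega}$.

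We now build a track in $D-S$ from $\Up{\omega}$ into $P$. By the paper's standing assumption that ends in $A$ contain in-rays (this needs checking; otherwise we replace the start of the track by an in-dominating vertex, or argue via an end $\beta\geq\omega$), the end $\omega$ contains an in-ray. Infinitely many disjoint paths run from it into $Q$, and only finitely many of them meet $S$. We combine an in-ray of $\omega$ avoiding $S$ with such a path, then a segment of $Q$, then $P$. Shortening the result to its first hit of $\mathring{\Down{\omega}}$ and its last visit of $\Up{\omega}$ gives an $\Up{\omega}$--$\mathring{\Down{\omega}}$~track avoiding $S$, a contradiction. If $\omega$ has no in-ray, a track from the end $\omega$ itself may not exist. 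In that case we instead use the out-ray $Q$ directly, taking its startvertex region as the source, and I expect this to be the main obstacle.

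The delicate point throughout is this mismatch between the definitions: $\omega$-separating is phrased via tails of out-rays, whereas tracks from an end require in-rays (or dominating vertices). Handling the case where $\omega$ has no in-ray, possibly by showing that the relevant tracks must then start at in-dominating vertices or at ends $\beta>\omega$, is where I expect most of the work.
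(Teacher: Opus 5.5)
Your first direction (finite $\omega$-separating $\Rightarrow$ separator) is correct and is essentially the paper's argument. The second direction has a genuine gap.

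You assume that $\omega$ contains an in-ray, but that is not a hypothesis of this proposition; the in-ray assumption belongs to the main theorem. You are only given $d^-(\omega)\ge 1$ and a finite $\omega$-separating set $U$, and your argument never uses $U$. That hypothesis cannot be dropped. Take an out-ray $r_1r_2\dots$ and, for each $k$, an out-ray $o_{k,1}o_{k,2}\dots$, together with edges $r_k\to o_{k,1}$ and $o_{k,j}\to r_{k+j}$.
\begin{itemize}
\item Every edge increases the potential $r_m\mapsto m$, $o_{k,j}\mapsto k+j-\tfrac12$, so there are no in-rays.
\item All in-degrees are finite, so there are no $\omega$--in-dominating vertices.
\item Hence there is no $\Up{\omega}$--$\mathring{\Down{\omega}}$~track at all, and $\emptyset$ is a finite separator.
\item Yet from every tail of $r_1r_2\dots$ starts an out-ray $r_ko_{k,1}o_{k,2}\dots$ whose end $\alpha_k\neq\omega$ satisfies $\alpha_k\le\omega$. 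So $\emptyset$ is not $\omega$-separating.
\end{itemize}
The no-in-ray case you flag as the main obstacle is therefore exactly where $U$ must enter.

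Even when $\omega$ has an in-ray, your construction fails as written. You fix an arbitrary tail $Q$ avoiding $S$, but the connecting path from the in-ray may land on $Q$ after the start of $P$, and you cannot walk backwards along an out-ray. For example, take $r_0r_1r_2\dots$ with back edges $r_{i+1}\to r_i$ for $i\ge 1$, and edges $r_1\to s\to r_0$, $r_0\to x$, and $x\to y_i\to r_i$ for $i\ge1$. Then $\{s\}$ is a separator and $Q=r_0r_1\dots$ avoids it, yet $r_0x$ is a violating path avoiding $\{s\}$. This part is repairable: let $Q$ start where an $S$-avoiding path from a tail of the in-ray lands on $R$ beyond $S$. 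The no-in-ray case is not repairable this way.

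The missing idea, as in the paper, is to argue by contradiction over all tails and to extract the start of the track from $U$.
\begin{enumerate}
\item Suppose some out-ray $\hat R\in\omega$ has, for each of its tails, a violating path or out-ray avoiding the separator $\hat S$.
\item Pass to the tail $\tilde R$ witnessing that $U$ is $\omega$-separating. Every violator starting on $\tilde R$ meets $U$, so one vertex $u\in U$ lies on violators starting arbitrarily far along $\tilde R$.
\item Their initial segments up to $u$ give an in-arborescence rooted at $u$ that avoids $\hat S$ and has infinitely many vertices on $\tilde R$.
\item This arborescence contains either a vertex of infinite in-degree, which is $\omega$--in-dominating, or an in-ray. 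Such an in-ray receives infinitely many disjoint paths from $\tilde R$, so it lies in an end $\beta$ with $\omega\le\beta$, i.e.\ in $\Up{\omega}$.
\item Either way you get an $\Up{\omega}$--$U$~track into $u$ avoiding $\hat S$. Continuing along a violator through $u$ gives an $\Up{\omega}$--$\mathring{\Down{\omega}}$~track avoiding $\hat S$, a contradiction.
\end{enumerate}
This never needs an in-ray of $\omega$. The ordering problem along $Q$ also disappears, because the track is built backwards from $u$ rather than forwards along a pre-chosen tail.
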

	\begin{proof}
		Let $S$ be an arbitrary $\omega$-separating set and suppose for a contradiction that there is an $\Up{\omega}$--$\mathring{\Down{\omega}}$~track $T$ that avoids $S$.
		Furthermore, let $R$ be an arbitrary out-ray in $\omega$ and let $Q$ be a tail of $R$ witnessing that $S$ is $\omega$-separating.
		Note that there is a $Q$--$T$~path $P$ that avoids $S$, which implies that $P \cup T$ avoids $S$.
		The union $P \cup T$ contains either a path starting in $V(Q)$ and ending in an $\omega$--out-dominating vertex or an out-ray of some end in $\{\alpha: \alpha \leq \omega\} \setminus \{\omega\}$ that starts in $V(Q)$, a contradiction to the choice of $Q$.
		
		Now, let $\hat{S}$ be an arbitrary $\Up{\omega}$--$\mathring{\Down{\omega}}$~separator and suppose for a contradiction that $\hat{S}$ is not $\omega$-separating.
		Then there exists an out-ray $\hat{R}$ such that for every tail $\hat{Q}$ of $\hat{R}$ there exists either a path $P(\hat{Q})$ starting in $V(\hat{Q})$ and ending in an $\omega$--out-dominating vertex that avoids $\hat{S}$ or an out-ray $P(\hat{Q})$ of some end in $\{\alpha: \alpha \leq \omega\} \setminus \{\omega\}$ that starts in $V(\hat{Q})$ and avoids $\hat{S}$.
		
		Let $U$ be a finite $\omega$-separating set and let $\Tilde{R}$ be an tail of $\hat{R}$ witnessing that $U$ is $\omega$-separating.
		Then the path/out-ray $P(\Tilde{Q})$ intersects $U$ for every tail $\Tilde{Q}$ of $\Tilde{R}$.
		Thus some vertex $u \in U$ is contained in infinitely many $P(\Tilde{Q})$.
		This implies that there exists an in-arborescence $A$ rooted in $u$ with infinitely many vertices in $\Tilde{R}$, that avoids $\hat{S}$ and such that each vertex of $A$ in contained in some path from $\Tilde{R}$ to $u$ in $A$.
		Then $A$ contains either a vertex of infinite in-degree or an in-ray.
		In the former case, this vertex is $\omega$--in-dominating.
		In the latter case, there are infinitely many disjoint paths from $\Tilde{R}$ to this in-ray and thus this in-ray is contained in some end of $\Up{\omega}$.
		In both cases, $A$ contains an $\Up{\omega}$--$U$~track $\hat{T}$ that avoids $\hat{S}$.
		We can deduce that the union of $\hat{T}$ and some $P(\Tilde{Q})$ contains an $\Up{\omega}$--$\mathring{\Down{\omega}}$ track that avoids $\hat{S}$, a contradiction.
	\end{proof}

	\begin{proof}[Proof of~\cref{lem:combined_end_degree} if there is a finite $\omega$-separating set]
		By~\cref{prop:separating_comparison}, the minimum size of an $\omega$-separating set is equal to the minimum size of an $\Up{\omega}$--$\mathring{\Down{\omega}}$~separator.
		In particular, there exists a finite $\Up{\omega}$--$\mathring{\Down{\omega}}$~separator, which implies that ($\Up{\omega}$,$\mathring{\Down{\omega}}$) is dispersed.
		Thus we can apply~\cref{thm:main}, which shows that the minimum size of an $\Up{\omega}$--$\mathring{\Down{\omega}}$~separator is equal to the maximum number of disjoint $\Up{\omega}$--$\mathring{\Down{\omega}}$~tracks.
	\end{proof}
	
	\begin{proposition}\label{prop:finite_separating}
		Let $\omega$ be an end of a digraph $D$.
		If $d^-(\omega)$ is finite, then there exists a finite $\omega$-separating set.
	\end{proposition}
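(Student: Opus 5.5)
We aim to build, from a maximum family of disjoint out-rays in $\omega$, a finite set $S$ together with one common tail per ray that witnesses $\omega$-separation. Let $k:=d^-(\omega)$ and fix a family $\mathcal{R}=\{R_1,\dots,R_k\}$ of disjoint out-rays in $\omega$ of maximum size. (If $k=0$, there is no out-ray in $\omega$ and the empty set is trivially $\omega$-separating, so assume $k\geq 1$.) The natural candidate is the following. Choose a vertex $x_i$ on each $R_i$, let $S$ be the set of these $k$ vertices, and let the tails be $x_iR_i$. We will instead pick $S$ via \cref{lem:general} in order to control all out-rays of $\omega$ at once.

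\textbf{Step 1: apply \cref{lem:general}.} Let $A:=\{x_1,\dots,x_k\}$ with $x_i$ the startvertex of $R_i$, and let $B$ be the set consisting of the $\omega$--out-dominating vertices together with all ends $\alpha\le\omega$, $\alpha\neq\omega$, that contain an out-ray. Then \cref{lem:general} gives a maximum family of disjoint $A$--$\Down{B}$~tracks and an $A$--$B$~separator $S$ of the same size. The claim to prove here is that this size is finite, in fact at most $k$.

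Suppose instead that there are $k+1$ disjoint $A$--$\Down{B}$~tracks. This is impossible, since only $k$ of them can start in $A$, as $|A|=k$. Hence $|S|\le k$ is finite.

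\textbf{Step 2: extend $S$ and pick the tails.} We must show that $S$, possibly enlarged by finitely many vertices, is $\omega$-separating. The issue is that the definition quantifies over every out-ray $R\in\omega$, not just over the $R_i$. Given an arbitrary $R\in\omega$, infinitely many disjoint $R$--$R_1$~paths exist. Any path from a late vertex of $R$ to an $\omega$--out-dominating vertex (or an out-ray into a smaller end) avoiding $S$ could therefore be prepended by a path from $A$ along $R_1$ to $R$ and back. This would produce an $A$--$B$~track avoiding $S$, provided the connecting pieces avoid $S$.

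To guarantee this, take the tail $Q$ of $R$ so late that it lies beyond $S$ in the following sense. Since $S$ is finite, choose $Q$ in the same strong "component at infinity" as $\omega$ in $D-S$, i.e.\ such that $Q$ lies in $\Cv(\omega,S)\cap\vC(\omega,S)$. Then there is a path from $R_1$ (after $S$) to $Q$ avoiding $S$.

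\textbf{Main obstacle.} The prefix from $x_1$ along $R_1$ may itself hit $S$. This is where the argument is delicate. I would fix it by replacing $A$ with $A$ together with suitable tails. Concretely, instead of invoking \cref{lem:general} for $A$, add to $S$ the finite initial segments of the $R_i$ up to their last vertex in $S$, which is still finite. Then argue with the tails beyond these segments, which lie in $D-S$ and reach $Q$ inside $D-S$. The resulting track from the tail of $R_1$ into $B$ avoids $S$ but does not start in $A$. To close this gap, reroute it through the alternating-structure bound: it would yield $k+1$ disjoint out-rays in $\omega$ only if combined with the $R_i$. So I would conclude instead via \cref{prop:separator_closure}-style closure, using that the added initial segments make the enlarged $S$ separate every vertex of $\Cv(\omega,S)$ from $B$.
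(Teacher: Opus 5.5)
Your argument has a genuine gap. The hypothesis that there are no $k+1$ disjoint out-rays in $\omega$ is never used substantively, and the set $S$ you obtain says nothing about tails of rays in $\omega$.

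In Step~1 the bound $|S|\le k$ is vacuous. $A$ itself is an $A$--$B$~separator, since every $A$--$B$~track starts in $A$ and finite sets are automatically dispersed. So \cref{lem:general} may simply return $S=A$, a set at the very beginning of the rays $R_i$. An $A$--$B$~separator only has to meet connections that start in $A$, whereas $\omega$-separation concerns connections that start on late tails, and these never need to come near $A$. This is exactly the obstacle you identify in Step~2. Your proposed repair, adding initial segments of the $R_i$ and asserting that the enlarged set then separates the tails (or $\Cv(\omega,S)$) from $B$, just restates the proposition.

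For the $S$ your construction may produce, that assertion is in fact false. Let $D$ consist of an out-ray $v_1v_2\dots$, a vertex $u$, an edge from $u$ to every $v_i$, and an edge from every $v_{2i}$ to $u$ ($i\in\N$). Its unique end $\omega$ has $d^-(\omega)=1$ and contains no in-ray. The vertex $u$ is the only $\omega$--out-dominating vertex and there are no smaller ends, so $B=\{u\}$. With $A=\{v_1\}$, the set $S=\{v_1\}$ is a minimum $A$--$B$~separator, and your enlargement leaves it unchanged. It is not $\omega$-separating, because every tail of the ray contains some $v_{2i}$ with an edge to $u$ avoiding $S$; yet $\{u\}$ is $\omega$-separating. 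The example also shows that $\Cv(\omega,S)$, which is defined via in-rays of $\omega$, can be empty. So the ``strong component at infinity'' you use in Step~2 is not available in general.

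What is missing is an argument that actually produces a $(k+1)$-st out-ray in $\omega$ when no finite set is $\omega$-separating. The paper argues by contradiction and builds $k+1$ disjoint paths step by step:
\begin{itemize}
\item Given $k$ disjoint out-rays in $\omega$ and any finite set $X$, non-separation yields some $Q$ avoiding $X$: either a path from the current tails to an $\omega$--out-dominating vertex, or an out-ray of an end in $\mathring{\Down{\omega}}$.
\item The current extra endpoint $x^n$ is itself out-dominating or lies on such an out-ray. Hence it has $k$ paths $O_1,\dots,O_k$ back into the rays.
\item Menger's theorem, applied in the finite subgraph formed by the ray segments, $Q$ and the $O_i$, extends the $k+1$ disjoint paths by one step.
\item The paths $S^n$ and $T^n$ certify that the $k+1$ limiting out-rays lie in $\omega$, contradicting $d^-(\omega)=k$.
\end{itemize}
A single application of \cref{lem:general} to a finite set of startvertices cannot replace this iterative rerouting.
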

	
	\begin{proof}
		We suppose for a contradiction that $k:=d^-(\omega)$ is finite and that there is no finite $\omega$-separating set.
		We can assume that $k \neq 0$, since the empty set is $\omega$-separating if $k = 0$.
		Let $\mathcal{R}$ be a family of $k$ many disjoint out-rays in $\omega$.
		For every $n \in \N$ we construct a family $(P_i^n: i \in [k+1])$ of disjoint paths in $D$ such that:
		\begin{enumerate}[label=(\alph*)]
			\item\label{itm:construct_1} $P_i^n$ is a proper initial segment of $P_i^{n+1}$ for every $i \in [k+1]$,
			\item\label{itm:construct_2} there exists a tail $R'$ of $R$ that starts in the endvertex of some $P_i^{n}$ and is otherwise disjoint to $\bigcup_{i \in [k+1]} P_i^n$ for every $R \in \mathcal{R}$,
			\item\label{itm:construct_3} the unique endvertex $x^n$ of the paths $(P_i^n: i \in [k+1])$ that is not startvertex of some $\{R': R \in \mathcal{R}\}$ is either $\omega$--out-dominating or the startvertex of some out-ray in some end in $\mathring{\Down{\omega}}$ that intersects $\bigcup_{i \in [k+1]} P_i^n \cup \bigcup \mathcal{R}$ only in $x^n$.
		\end{enumerate}
		Furthermore, we construct families $(S^n: n \in \N)$ and $(T^n: n \in \N)$ of disjoint paths, where $S^n$ is an $(\bigcup \mathcal{R})$--$x^n$~path and $T^n$ is an $x^n$--$(\bigcup \mathcal{R})$~path.
		
		Since the empty set is not $\omega$-separating, there exists a vertex $x^1$ that is either $\omega$--out-dominating or the startvertex of some out-ray in some end in $\mathring{\Down{\omega}}$ that avoids $\bigcup \mathcal{R}$.
		Let $a_1, \dots, a_k$ be vertices such that for each out-ray $R \in \mathcal{R}$ there is $i \in [k]$ such that $a_i \in V(R)$ and $x^1 \notin V(a_iR)$.
		We set $P_i^1:= \{a_i\}$ for $i \in [k]$ and $P_{k+1}^1 := \{x^1\}$.
		
		We assume that $(P_i^n: i \in [k+1])$ have been constructed for some $n \in \N$.
		Let $a_R \in V(R)$ such that $a_R R$ is the unique tail of $R$ as defined in \labelcref{itm:construct_2} for every $R \in \mathcal{R}$, i.e. $a_R$ is the endvertex of some $P_i^n$.
		Since the finite set $X:= V(\bigcup_{m \in [n]} (S^m \cup T^m) \cup \bigcup_{i \in [k+1]} P_i^{n})$ is not $\omega$-separating and by the definition of ends, there is either a path $Q$ ending in an $\omega$--out-dominating vertex or an out-ray $Q$ in some end in $\mathring{\Down{\omega}}$ such that $Q$ intersects $(\bigcup_{R \in \mathcal{R}} a_R R)$ precisely in its startvertex and avoids $X$.
		
		Let $q$ be the startvertex of $Q$ and let $R_q \in \mathcal{R}$ with $q \in V(R_q)$.
		By the choice of $x^{n}$, there are $k$ many $x^{n}$--$qR_q$~paths $O_1, \dots, O_k$ that avoid $(\bigcup_{i \in [k+1]} P_i^n \setminus \{x_n\}) \cup V(a_{R_q} R_q q)$ and are disjoint in $\bigcup_{R \in \mathcal{R}} V(a_R R)$.
		
		We set $A:=\{a_R: R \in \mathcal{R}\} \cup \{x^n\}$, i.e. $A$ is the set of all endvertices of the paths $(P_i^{n}: i \in [k+1])$.
		Let $b_R \in V(R)$ such that $b_R R$ avoids $\bigcup_{i \in [k]} O_i$ for every $R \in \mathcal{R}$.
		Furthermore, let $x^{n+1}$ be the endvertex of $Q$, if $Q$ is a path, or, otherwise, some vertex in $V(Q)$ such that $x^{n+1} Q$ avoids $\bigcup_{i \in [k]} O_i$.
		We set $B:= \{b_R: R \in \mathcal{R}\} \cup \{x^{n+1}\}$.
		We show that there is no $A$--$B$~separator of size $k$ in $\bigcup_{R \in \mathcal{R}} a_R R b_R \cup Q x^{n+1} \cup \bigcup_{i \in [k]} O_i$.
		
		Suppose for a contradiction, that $Y$ is an $A$--$B$~separator of size $k$.
		Then each of the disjoint paths $(a_R R b_R)_{R \in \mathcal{R}}$ contains precisely one vertex of $Y$.
		Moreover, the path $a_{R_q} R_q q Q x^{n+1}$ shows that the unique vertex in $V(R_q) \cap Y$ is contained in $V(a_{R_q} R_q q)$.
		Then there is $i \in [k]$ such that $O_i$ avoids $Y$ since $\bigcup_{i \in [k]} O_i$ avoids $V(a_{R_q} R_q q)$.
		This implies that $O_i \cup (q R_q - q)$ contains an $A$--$B$~path that avoids $Y$, a contradiction.
		
		Thus, by Menger's theorem, there is a family $\mathcal{F}$ of $k+1$ many disjoint $A$--$B$~paths in $\bigcup_{R \in \mathcal{R}} a_R R b_R \cup Q x^{n+1} \cup \bigcup_{i \in [k]} O_i$.
		Let $P_i^{n+1}$ be the concatenation of $P_i^{n}$ and the unique path in $\mathcal{F}$ that starts in the endvertex of $P_i^{n}$ for every $i \in [k+1]$.
		By the choice of $Q$, there exists an $(\bigcup \mathcal{R})$--$x^{n+1}$~path $S_{n+1}$ and an $x^{n+1}$--$(\bigcup \mathcal{R})$~path $T_{n+1}$ such that $S_{n+1}$ and $T_{n+1}$ avoid $X$.
		This completes the construction.
		
		Then $(S_n)_{n \in \N}$ and $(T_n)_{n \in \N}$ witness that $\bigcup_{n \in \N} P_i^n$ is an out-ray in $\omega$ for every $i \in [k+1]$.
		This implies that $(\bigcup_{n \in \N} P_i^n)_{i \in [k+1]}$ is a family of disjoint rays in $\omega$, which contradicts the choice of $k$ and completes the proof.
	\end{proof}
	
	\begin{proof}[Proof of~\cref{lem:combined_end_degree} if there is no finite $\omega$-separating set]
		By~\cref{prop:finite_separating}, $d^-(\omega)$ is infinite.
		This implies that both $\Delta^-(\omega)$ and the sum of $d^-(\omega)$ and the maximum number of disjoint $\Up{\omega}$--$\mathring{\Down{\omega}}$~tracks are infinite, and thus equal.
	\end{proof}
	
	We remark that if we allow $R$ to be either an in-ray or an out-ray in the definition of $\omega$-separating, then \cref{lem:combined_end_degree} also holds true for ends $\omega$ with $d^-(\omega) = 0$.
	
	\section*{Acknowledgement}
	The author gratefully acknowledges support by a doctoral scholarship of the Studienstiftung des deutschen Volkes.
	
	\bibliography{ref.bib}
	
\end{document}